\DeclareMathAlphabet{\mathpzc}{OT1}{pzc}{m}{it}
\newcommand{\subfiguretitle}[1]{{\scriptsize{#1}} \\[1mm] }
\newcommand{\R}{\mathbb{R}}
\newcommand{\C}{\mathbb{C}}
\providecommand{\norm}[1]{\left\lVert #1 \right\rVert}
\DeclareMathOperator{\tr}{tr}
\DeclareMathOperator{\Aut}{Aut}
\newtheorem{theorem}{Theorem}[section]
\newtheorem{corollary}[theorem]{Corollary}
\newtheorem{proposition}[theorem]{Proposition}
\newtheorem{definition}[theorem]{Definition}
\theoremstyle{definition}
\newtheorem{example}[theorem]{Example}
\newtheorem{remark}[theorem]{Remark}
\renewcommand*\env@matrix[1][*\c@MaxMatrixCols c]{%
  \hskip -\arraycolsep
  \let\@ifnextchar\new@ifnextchar
  \array{#1}}
\begin{document}

\title{On the Equivariance Properties \\ of Self-adjoint Matrices}
\author[*]{Michael Dellnitz, Bennet Gebken, Raphael Gerlach}
\author[**]{\\Stefan Klus}
\affil[*]{\footnotesize Department of Mathematics, Paderborn University, D-33095 Paderborn, Germany}
\affil[**]{\footnotesize Department of Mathematics and Computer Science, Freie Universit\"at Berlin, D-14195 Berlin, Germany}
\maketitle

\begin{abstract}
	We investigate self-adjoint matrices $A\in\R^{n,n}$ with respect to their equivariance properties. We show in particular that a matrix is self-adjoint if and only if it is equivariant with respect to the action of a group $\Gamma_2(A)\subset \mathbf{O}(n)$ which is isomorphic to $\otimes_{k=1}^n\mathbf{Z}_2$. If the self-adjoint matrix possesses multiple eigenvalues -- this may, for instance, be induced by symmetry properties of an underlying dynamical system -- then $A$ is even equivariant with respect to  the action of a group $\Gamma(A) \simeq \prod_{i = 1}^k \mathbf{O}(m_i)$ where
	$m_1,\ldots,m_k$ are the multiplicities of the eigenvalues $\lambda_1,\ldots,\lambda_k$ of $A$. We discuss implications of this result for equivariant bifurcation problems, and we briefly address further applications for the Procrustes problem, graph symmetries and Taylor expansions.
\end{abstract}

{\em Key words:} self-adjoint matrix; equivariance; bifurcation theory; Procrustes problem; Taylor expansion

{\em AMS subject classifications.}  15B57, 15A24, 37G40, 41A58

%

\section{Introduction}

For more than 30 years the influence of symmetry properties of a dynamical
system on its qualitative temporal behavior has been intensively studied. Such symmetry
properties are typically induced by network structures or geometric properties of the underlying
mathematical model. The related research focuses on a variety of topics, for instance, the
classification of symmetry breaking bifurcations (\cite{GSS88}) or the explanation of the
occurrence of stable heteroclinic cycles. For an overview of this area and their relevance
in the sciences we refer to \cite{GS03}.

Formally, symmetry properties of a dynamical system $\dot x = f(x)$
manifest themselves by an equivariance property of the right-hand side.
That is, $f\colon\R^n \to \R^n$ satisfies
\begin{equation*}
	\gamma f(x) = f(\gamma x)\quad \mbox{for all $\gamma \in \Gamma$,}
\end{equation*}
where $\Gamma \subset \mathbf{O}(n)$ is a compact Lie group. It is
well known that equivariance properties are inherited by the
linearization $Df(x^*)$ of $f$ from the symmetry properties of the
steady-state solutions $x^*$.
In fact, this is the reason why generically $Df(x^*)$ may possess multiple
eigenvalues, which implies the occurrence of complex symmetry breaking
bifurcations in dynamical systems. This happens, for instance, if $\Gamma = \mathbf{O}(n)$
($n\ge 2$) or $\Gamma = \mathbf{D}_\ell$ ($\ell\ge 3$), where $\mathbf{D}_\ell$ is the dihedral group
of order $\ell$, that is, the symmetry group of the $\ell$-sided regular polygon.

The investigations in this article are motivated by the analysis of
equivariant dynamical systems where the linearization $A = Df(x)$ is additionally
self-adjoint, that is, the matrix $A\in \R^{n,n}$ satisfies $A = A^T$.
Recently, it has been observed that a matrix is
self-adjoint if and only if it is equivariant with respect to 
the action of a group $\Gamma_2(A)\subset \mathbf{O}(n)$
which is isomorphic to $\prod_{i = 1}^n\mathbf{Z}_2$ (see \cite{De17}).
This underlying equivariance property
is implicitly present in articles concerning the development of dynamical
systems for the solution of certain optimization problems
(e.g.\ \cite{Scho68, Bro89, Bro91}). But to the best of our knowledge
it has not explicitly been stated elsewhere before -- and definitely not in
the dynamical systems context.

In this article, we extend this result from \cite{De17} significantly
in the sense that we completely
characterize the equivariance properties of self-adjoint matrices by their
spectra. In fact, we will show in our main result on the equivariance properties
of self-adjoint matrices (Corollary~\ref{cor:sadj}) that
$\Gamma(A)$ is isomorphic to $\prod_{i = 1}^k \mathbf{O}(m_i)$ where
$m_1,\ldots,m_k$ are the multiplicities of the eigenvalues
$\lambda_1,\ldots,\lambda_k$ of $A$. In particular, if $A$ has only
simple eigenvalues, then $\Gamma(A) = \Gamma_2(A) \simeq \prod_{i = 1}^n\mathbf{Z}_2$.

One important consequence of this result is the following observation: 
Suppose that the underlying dynamical system is $\mathbf{D}_\ell$-equivariant
for an $\ell \ge 3$. Then -- as already mentioned above -- the linearization
$Df(x^*)$ at a $\mathbf{D}_\ell$-symmetric steady-state solution $x^*$
generically possesses double eigenvalues. Our result implies that in this
case the linearization will not just be $\mathbf{D}_\ell$-equivariant but (at least) even
equivariant with respect to an action of
$\Gamma(A) \simeq \mathbf{O}(2) \times \prod_{i = 1}^{n-2}\mathbf{Z}_2$.

Moreover, if in addition the entire function $f$ is $\Gamma(A)$-equivariant, then
symmetry-related bifurcations of the system will be governed by
$\Gamma(A)$ rather than $\mathbf{D}_\ell$, and this leads
to phenomena which would generically be unexpected if only $\mathbf{D}_\ell$
is taken into account. This would apply, for instance, to numerical discretizations
of the cubic or quintic Ginzburg--Landau equation on a $\mathbf{D}_\ell$-symmetric spatial domain
(\cite{CM96,AK02}).
Thus, from an abstract point of view our results are strongly related
to the notion of \emph{hidden symmetries} which has been
introduced in connection with the occurrence of unexpected bifurcations in partial differential
equations with Neumann boundary conditions (\cite{DA86,GS03}).
We will illustrate this fact by several examples in the
following sections.

A detailed outline of the structure of this article is as follows.
In Section~\ref{sec:Motivation}, we introduce a specific
$\mathbf{Z}_2$-equivariant dynamical system as a guiding example.
This system exhibits unexpected dynamical phenomena driven by the
underlying $\Gamma(A)$-equivariance: A symmetry-preserving
pitchfork bifurcation and the existence of an entire orbit of steady-state
solutions.
Then, in Section~\ref{sec:MR}, we review briefly the main result from \cite{De17}.
This will allow us to reveal the structure which leads to the symmetric pitchfork
bifurcation in the guiding example. Our main results concerning the 
equivariance properties of self-adjoint matrices are stated in Section~\ref{sec:mainres}.
In Section~\ref{sec:Impl}, we discuss the consequences for bifurcations in equivariant
dynamical systems.
Finally, in Section~\ref{sec:appl}, we discuss a couple of further applications:
First we characterize all solutions of the two-sided orthogonal Procrustes problem
(Section \ref{sec:app1}).
Then we briefly discuss consequences for the graph isomorphism problem for undirected
graphs (Section \ref{sec:app2}). We conclude with the construction of simple 
approximations of derivatives of higher order for real valued functions (Section \ref{sec:app3}).
Here we make use of the fact that each Hessian $H$ is symmetric and 
therefore also $\Gamma(H)$-equivariant.

\section{Motivation -- the Guiding Example}
\label{sec:Motivation}
As a guiding example we consider the differential equation

\begin{equation} \label{eq:DSM}
	\dot{x} = A(\mu) x - \norm{x}_2^2 x, 
\end{equation}
where $x\in \R^3,\mu \in\R$ and
\begin{equation*}
	A(\mu) = \begin{pmatrix}
		2 & \sqrt{2}(2 \mu - 1) & \sqrt{2}(2 \mu - 1) \\
		\sqrt{2}(2 \mu - 1) & 3 - 2 \mu & 2 \mu - 1 \\
		\sqrt{2}(2 \mu - 1) & 2 \mu - 1 & 3 - 2 \mu
	\end{pmatrix}.
\end{equation*}
Observe that this problem has an obvious $\mathbf{Z}_2$-symmetry: first the
matrix $A(\mu)$ commutes for all $\mu$ with the permutation matrix
\begin{equation}
	\label{eq:S}
	S = \begin{pmatrix}
		1 & 0 & 0 \\
		0 & 0 & 1 \\
		0 & 1 & 0
	\end{pmatrix}.
\end{equation}
That is,
\[
S A(\mu) =  A(\mu) S \quad \mbox{for all $\mu\in\R$.}
\]
Moreover, $\norm{x}_2^2$ is invariant under orthogonal transformations. Therefore, the
right-hand side $f(x,\mu) = A(\mu) x -\norm{x}_2^2 x$ in \eqref{eq:DSM} is $\mathbf{Z}_2$-equivariant and satisfies
\[
f(Sx,\mu) = Sf(x,\mu) \quad \mbox{for all $x\in \R^3$ and $\mu\in\R$.}
\]
Thus, by genericity results from classical bifurcation or singularity theory (see \cite{GSS88}) we would
particularly expect that 
\begin{itemize}
	\item[$(i)$] the only steady-state bifurcations that occur in \eqref{eq:DSM} are turning points or (symmetry-breaking)
	pitchfork bifurcations corresponding to the underlying symmetry given by
	$\mathbf{Z}_2 = \{ I, S\}$ (e.g.\ \cite{GSS88,WS84});
	\item[$(ii)$] equilibria of \eqref{eq:DSM} are isolated.
\end{itemize}

In contrast to this expectation we observe the following two phenomena for \eqref{eq:DSM}:
\begin{itemize}
	\item[$(i)^*$] Apparently the system undergoes a pitchfork bifurcation at $\mu=0$. The corresponding
	local bifurcation diagram is shown in Figure~\ref{fig:BF}~(a). However, at the bifurcation point a normalized kernel vector
	of $A(0)$ is given by
	\begin{equation}
		\label{eq:ev}
		v=\left(\frac{1}{\sqrt{2}},\frac{1}{2},\frac{1}{2}\right)^T.
	\end{equation}
	In particular, this eigenvector is $S$-symmetric ($S v = v$) rather than antisymmetric ($S v = -v$)
	as expected. Accordingly, also the equilibria on the bifurcating branches
	are $S$-symmetric, see Figure \ref{fig:BF} (a).
	
	\begin{figure}
		\centering
		\begin{minipage}[t]{0.49\textwidth}
			\centering
			\subfiguretitle{(a)}
			\includegraphics[width = 0.9\textwidth]{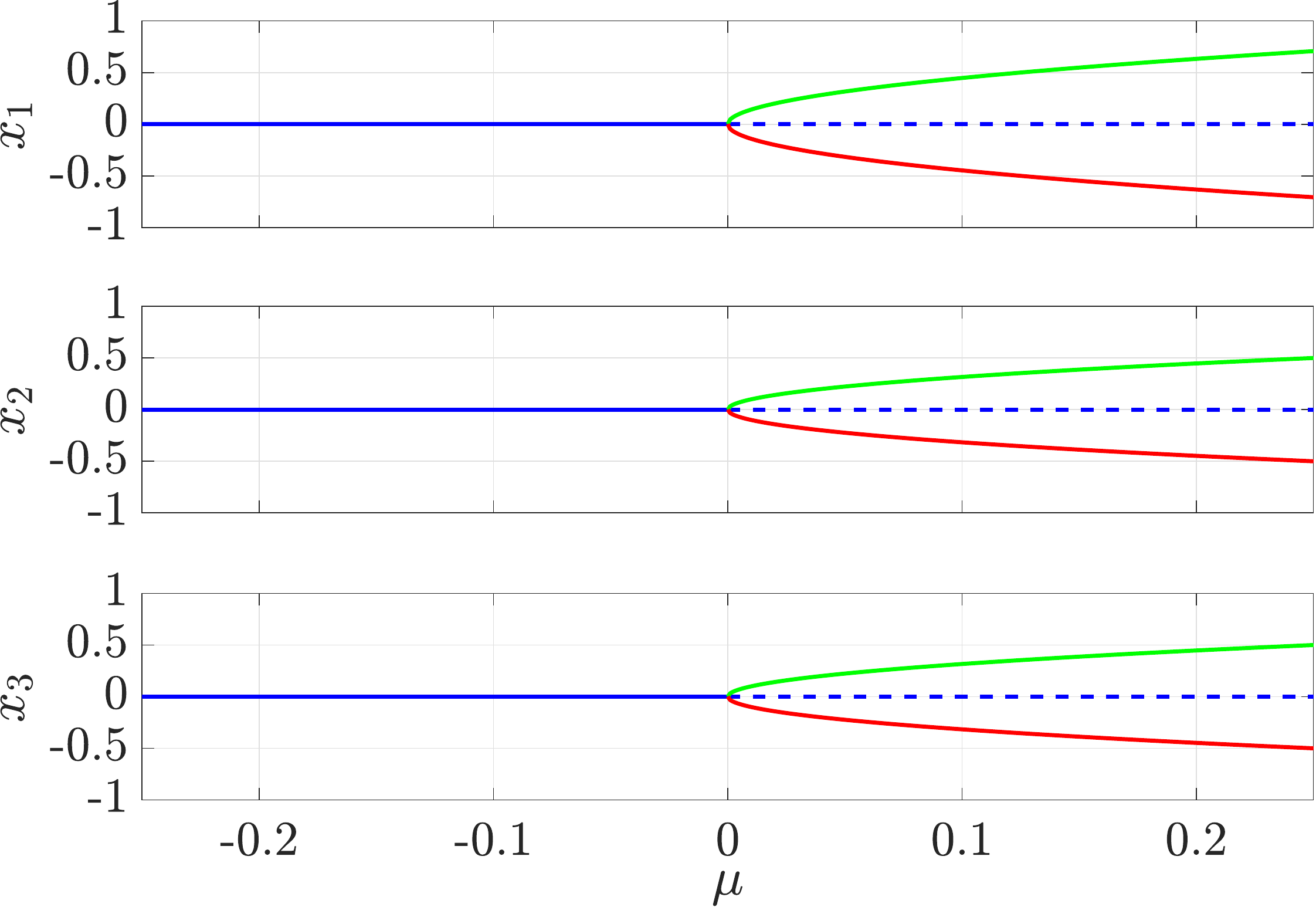}
		\end{minipage}
		\begin{minipage}[t]{0.49\textwidth}
			\centering
			\subfiguretitle{(b)}
			\includegraphics[width = 0.95\textwidth]{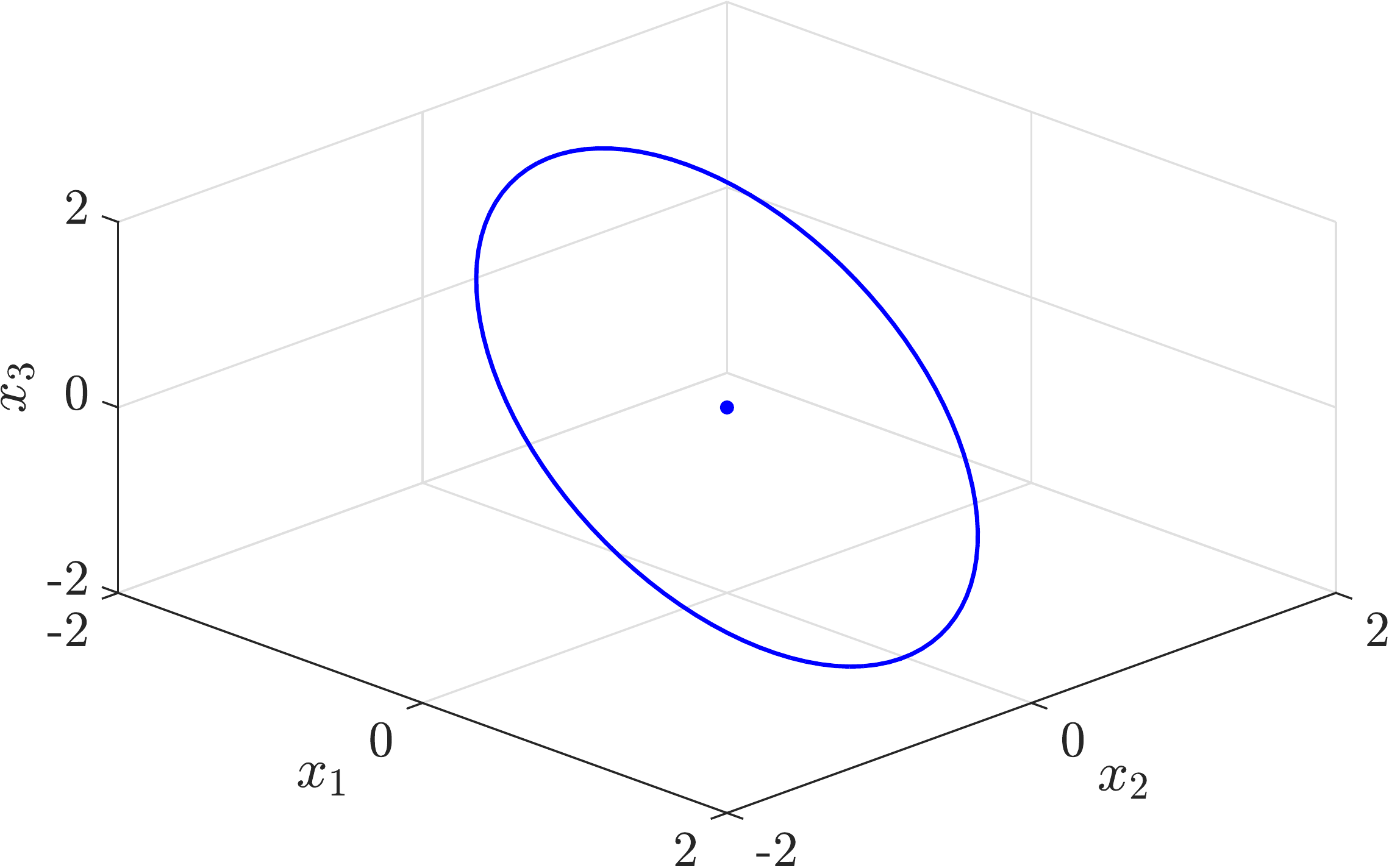}
		\end{minipage}
		\caption{(a) Illustration of the `symmetric' pitchfork bifurcation for $\mu = 0$
			in the dynamical system \eqref{eq:DSM}.
			The bifurcating steady-state solutions are $S$-symmetric, that is $x_2 = x_3$
			on the green and red branches, whereas  $x_1 \neq x_2$ and $x_1 \neq x_3$
			on these branches as expected. (b) The set of equilibria of the dynamical system \eqref{eq:DSM} for $\mu=-0.25$.}
		\label{fig:BF}
	\end{figure}
	
	\item[$(ii)^*$] For $\mu=-0.25$ we find not just $x^* = 0$, but in addition
	an entire continuous orbit of equilibria for \eqref{eq:DSM}, see Figure~\ref{fig:BF} (b). In fact, we will see in Section \ref{sec:Impl} that such orbits exist for an entire range of parameter values.
\end{itemize}

It is the purpose of this work to explain such phenomena, and we will
see that this is strongly related to the fact that $A$ is self-adjoint.
In fact, our results will imply that the dynamical system \eqref{eq:DSM} is
$\mathbf{Z}_2 \times \mathbf{O}(2)$-equivariant with $\{I,S\} \subset \mathbf{O}(2)$,
and this will explain the phenomena described in $(i)^*$ and $(ii)^*$.

\section{Self-adjoint Matrices are Equivariant -- a Warm-up}
\label{sec:MR}

In this section, we briefly summarize the main result from \cite{De17}.
With this we illustrate the underlying structure, namely that 
equivariance properties of self-adjoint matrices are induced by the 
symmetry properties of diagonal matrices.

Let $\Sigma \subset \mathbf{O}(n)$ be the abelian group consisting of the
$2^n$ matrices
\[
\begin{pmatrix}
\pm 1   & 0 & \cdots   & 0 \\
0 & \pm 1   & \ddots & \vdots   \\
\vdots  & \ddots & \ddots & 0 \\
0 & \cdots   & 0 & \pm 1
\end{pmatrix}.
\]
For any diagonal matrix
\[
D = 
\begin{pmatrix}
\lambda_1   & 0 & \cdots   & 0 \\
0 & \lambda_2   & \ddots & \vdots   \\
\vdots  & \ddots & \ddots & 0 \\
0 & \cdots   & 0 & \lambda_n
\end{pmatrix}, \quad \lambda_i \in \R, \quad i=1,2,\ldots,n,
\]
we obviously have
\begin{equation*}
	\sigma D = D \sigma \quad \forall \sigma \in \Sigma.
\end{equation*}

In fact, it is easy to verify for an arbitrary matrix $B\in\R^{n,n}$ that

\begin{equation}
	\label{eq:DSigma}
	\sigma B = B \sigma \quad \forall \sigma \in \Sigma \quad \Longleftrightarrow \quad
	\mbox{$B$ is a diagonal matrix.}
\end{equation}

\begin{proposition}[\cite{De17}]
	\label{prop:main}
	A matrix $A\in \R^{n,n}$ is self-adjoint (i.e.\ $A = A^T$) if and only if there is
	an orthogonal matrix $V\in \mathbf{O}(n)$ such that
	\begin{equation}
		\label{eq:equiv}
		\gamma A = A \gamma\quad \forall \gamma \in \Gamma_2(A),
	\end{equation}
	where the group $\Gamma_2(A) \subset \mathbf{O}(n)$ is defined by
	\begin{equation}
		\label{eq:GV}
		\Gamma_2(A) = \{ V^T \sigma V : \sigma \in \Sigma \}.
	\end{equation}
\end{proposition}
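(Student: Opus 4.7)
The plan is to prove both implications by reducing everything to the diagonal case via orthogonal conjugation and then applying the already-established equivalence \eqref{eq:DSigma}. The spectral theorem will do the heavy lifting in one direction, while the reverse direction is essentially a matter of transferring the commutation property back through $V$.

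For the forward direction, I would start with a self-adjoint $A$ and invoke the spectral theorem to obtain $V \in \mathbf{O}(n)$ with $V A V^T = D$ diagonal. Since any diagonal matrix commutes with every $\sigma \in \Sigma$, I would write $\sigma V A V^T = V A V^T \sigma$ and then multiply by $V^T$ on the left and $V$ on the right to obtain $(V^T \sigma V) A = A (V^T \sigma V)$. Setting $\gamma = V^T \sigma V$ and letting $\sigma$ range over $\Sigma$ yields exactly \eqref{eq:equiv} with $\Gamma_2(A)$ as in \eqref{eq:GV}.

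For the converse, I would assume the existence of an orthogonal $V$ such that $\gamma A = A\gamma$ holds for every $\gamma \in \Gamma_2(A)$. Substituting $\gamma = V^T \sigma V$ and conjugating by $V$ gives $\sigma (V A V^T) = (V A V^T) \sigma$ for all $\sigma \in \Sigma$. The equivalence \eqref{eq:DSigma} then forces $B := V A V^T$ to be diagonal, hence symmetric. Since $A = V^T B V$ and orthogonal conjugation preserves symmetry ($A^T = V^T B^T V = V^T B V = A$), we conclude that $A$ is self-adjoint.

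I do not foresee a serious obstacle here: the argument is really just a change of basis combined with \eqref{eq:DSigma} and the spectral theorem. The only subtlety worth being explicit about is that the matrix $V$ appearing in \eqref{eq:GV} is not unique but may be chosen as any orthogonal diagonalizer of $A$; this is relevant later when the group $\Gamma(A)$ is enlarged in the presence of multiple eigenvalues, but for Proposition~\ref{prop:main} itself no care beyond existence of such a $V$ is needed.
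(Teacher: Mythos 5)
Your proposal is correct and follows essentially the same route as the paper's own proof: the spectral theorem plus conjugation by $V$ for the forward direction, and the equivalence \eqref{eq:DSigma} forcing $VAV^T$ to be diagonal (hence symmetric) for the converse. No gaps.
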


We state the proof for the sake of completeness.
\begin{proof}
	Suppose that $A = A^T$. Then there is
	$V\in \mathbf{O}(n)$ such that
	\[
	D = V A V^T
	\]
	is a diagonal matrix.
	By \eqref{eq:DSigma}
	we have for all $\sigma \in \Sigma$
	\begin{equation*}
		\sigma VAV^T = VAV^T \sigma \quad \Longleftrightarrow
		\quad V ^T\sigma VA = AV^T \sigma V.
	\end{equation*}
	Therefore, $A$ satisfies the equivariance condition \eqref{eq:equiv}.
	
	Now suppose that  \eqref{eq:equiv} is satisfied for some $V\in \mathbf{O}(n)$.
	Then the matrix $V A V^T$ commutes with every $\sigma\in\Sigma$, and 
	by \eqref{eq:DSigma}
	it follows that $D=VAV^T$ is a diagonal matrix. Therefore,
	\[
	A^T = (V^T D V)^T = A
	\]
	as desired.
\end{proof}

\begin{remark} \label{rmk:main} ~
	\begin{itemize}
		\item[(a)] Observe that the implication `$\Longrightarrow$' could also be proved by
		using the well-known fact that two matrices $A$ and $B$ commute if 
		there is an orthogonal transformation $V$ such that both $V^T A V$ and
		$V^T B V$ are diagonal.
		\item[(b)] By construction all the eigenvalues of every $\gamma \in \Gamma_2(A)$ are $1$ or $-1$.
		In particular $\gamma ^2 = I$ for all $\gamma \in \Gamma_2(A)$, and $\Gamma_2(A) \simeq \prod_{i = 1}^n\mathbf{Z}_2$.
		Moreover, by (a) the matrix $A$ and all $\gamma\in \Gamma_2(A)$ possess the same set of eigenvectors.
		\item[(c)] Obviously, analogous results can be obtained for Hermitian or normal matrices:
		Using essentially the same proof as in Proposition~\ref{prop:main} one can show that
		a matrix $A\in \C^{n,n}$ is normal (i.e.\ $AA^* = A^* A$) if and only if there is
		a unitary matrix $W\in \mathbf{U}(n)$ such that
		\[
		\gamma A = A \gamma\quad \forall \gamma \in \Gamma_2(A),
		\]
		where the group $\Gamma_2(A) \subset \mathbf{U}(n)$ is defined by
		\[
		\Gamma_2(A) = \{ W^* \sigma W : \sigma \in \Sigma \}.
		\]
	\end{itemize}
\end{remark}

\begin{example}
	\label{ex:pitchfork}
	Let us consider the matrix $A(0)$ from our guiding example in Section~\ref{sec:Motivation}, i.e.
	\begin{equation*}
		A(0) = \begin{pmatrix}
			2 & -\sqrt{2} & -\sqrt{2}\\
			-\sqrt{2} & 3 & -1 \\
			-\sqrt{2} & -1 & 3
		\end{pmatrix}.
	\end{equation*}
	The matrix
	\[
	V =
	\begin{pmatrix}
	-0.7071 & -0.5  &  -0.5 \\
	0.6969  & -0.3732  &  -0.6124\\
	0.1196  &  -0.7815 &  0.6124
	\end{pmatrix}
	\]
	transforms $A(0)$ into a diagonal matrix $D$ with the eigenvalues $0,4,4$
	of $A(0)$ on the 
	diagonal.
	With Proposition~\ref{prop:main} we can compute eight matrices
	which commute with $A(0)$ -- the elements of $\Gamma_2(A)$ --, and these matrices are given by
	\begin{align*}
		&\gamma_1=\begin{pmatrix}
			1 & 0 & 0\\
			0 & 1 & 0\\
			0 & 0 & 1
		\end{pmatrix}, &
		\gamma_2=\begin{pmatrix}
			-0.9714 & -0.1869 & 0.1464\\
			-0.1869 & 0.2214 & -0.9571\\
			0.1464 & -0.9571 & -0.2500
		\end{pmatrix},\\
		&\gamma_3=\begin{pmatrix}
			0 & \frac{1}{\sqrt{2}} & \frac{1}{\sqrt{2}}\\
			\frac{1}{\sqrt{2}} & -\frac{1}{2} & \frac{1}{2}\\
			\frac{1}{\sqrt{2}} & \frac{1}{2} & -\frac{1}{2}
		\end{pmatrix},
		&\gamma_4=\begin{pmatrix}
			0.0286 & 0.5202 & 0.8536\\
			0.5202 & 0.7214 & -0.4571\\
			0.8536 & -0.4571 & 0.2500
		\end{pmatrix}
	\end{align*}
	and $\gamma_{j+4}=-\gamma_j$ for $j=1,2,3,4$. (The entries in the matrices are
	exact up to four decimal places.) For the eigenvector $v$ in \eqref{eq:ev} we compute
	\[
	\gamma_7 v = - v.
	\]
	Thus, a symmetry breaking pitchfork bifurcation at $\mu = 0$ is induced by the group $\mathbf{Z}_2 = \{I, \gamma_7\}$
	rather than $\{ I , S \}$ (see \eqref{eq:S}), and this explains the phenomenon $(i)^*$
	discussed in Section~\ref{sec:Motivation}.
	
	Observe that $S$ is not among the matrices $\gamma_j$ $(j=1,\ldots,8)$, i.e. $S \not\in \Gamma_2(A)$,
	so that there is still
	another structure to be revealed. We will see in the following section that
	this is related to the fact that $4$ is a double eigenvalue of $A(0)$.
\end{example}

\section{Self-adjoint Matrices are Equivariant -- the General Case}
\label{sec:mainres}

In this section, we generalize Proposition~\ref{prop:main} significantly. In fact, we
will show that in general the group $\Sigma$ may be much more complex -- even
in the case where $A$ is not equivariant in the classical sense where
e.g.\ underlying geometric symmetries lead to
equvariance properties of a dynamical system.

\subsection{Orthogonal Isotropy Subgroups for Matrices}

The following observation forms the theoretical basis for our analytical investigations.
With this result we state a useful characterization of the group $\Gamma(A)$ containing
\emph{all} $\gamma \in \mathbf{O}(n)$ which commute with a given matrix $A$.

\begin{proposition} \label{thm:main}
	Let $A \in \mathbb{R}^{n,n}$ and $V \in \mathbf{O}(n)$. Define
	the compact group
	\begin{equation}
		\label{eq:Sigma}
		\Sigma_V(A) = \{ \sigma \in \mathbf{O}(n) \mid \sigma V A V^T = V A V^T \sigma \}
	\end{equation}
	and let
	\begin{equation} \label{eq:GammaV}
		\Gamma(A) = V^T \Sigma_V(A) V.
	\end{equation}
	Then for every $\gamma \in \mathbf{O}(n)$
	\begin{equation*}
		\gamma A = A \gamma\, \Longleftrightarrow \,\gamma \in \Gamma(A).
	\end{equation*}
	In particular, $\Gamma(A)$ does not depend on $V$, and we refer to $\Gamma(A)$ as the
	\emph{orthogonal isotropy subgroup of $A$}.
\end{proposition}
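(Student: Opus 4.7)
The plan is to reduce the commutation condition $\gamma A = A\gamma$ to the commutation condition defining $\Sigma_V(A)$ by a straightforward conjugation by the orthogonal matrix $V$. The crucial observation is that since $V \in \mathbf{O}(n)$, the map $\gamma \mapsto V\gamma V^T$ is a bijection of $\mathbf{O}(n)$ onto itself (in fact a group automorphism), and it intertwines the two commutation relations cleanly because $V^T V = I$.

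First I would verify the parenthetical claim that $\Sigma_V(A)$ is a compact group. It contains the identity, and if $\sigma_1, \sigma_2 \in \Sigma_V(A)$, then $\sigma_1\sigma_2$ commutes with $VAV^T$ by associativity, and $\sigma_1^{-1}$ commutes with $VAV^T$ by multiplying the defining identity on both sides by $\sigma_1^{-1}$; hence it is a subgroup of $\mathbf{O}(n)$. It is closed in $\mathbf{O}(n)$ because the matrix equation $\sigma(VAV^T) - (VAV^T)\sigma = 0$ defines a closed condition, and as a closed subset of the compact group $\mathbf{O}(n)$ it is itself compact.

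Next I would do the main equivalence. Given $\gamma \in \mathbf{O}(n)$, multiply $\gamma A = A\gamma$ on the left by $V$ and on the right by $V^T$; using $V^T V = I$ this is equivalent to
\begin{equation*}
(V\gamma V^T)(VAV^T) = (VAV^T)(V\gamma V^T).
\end{equation*}
Since $V\gamma V^T \in \mathbf{O}(n)$, the displayed equation says precisely that $V\gamma V^T \in \Sigma_V(A)$, which in turn is equivalent to $\gamma \in V^T \Sigma_V(A) V = \Gamma(A)$. Both implications run through the same chain of equivalences, so no separate arguments for the two directions are needed.

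For the final remark that $\Gamma(A)$ does not depend on the choice of $V$, I would simply note that the set on the left-hand side of the equivalence, namely $\{\gamma \in \mathbf{O}(n) : \gamma A = A\gamma\}$, makes no reference to $V$ at all; hence the set $\Gamma(A)$ characterised by it must coincide for every orthogonal $V$. There is really no serious obstacle here — the whole statement is a clean algebraic consequence of orthogonality of $V$ — and the only point requiring a line of care is the group/compactness verification for $\Sigma_V(A)$, which justifies calling $\Gamma(A)$ an orthogonal isotropy \emph{subgroup}.
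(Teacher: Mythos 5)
Your proposal is correct and takes essentially the same route as the paper: conjugating the relation $\gamma A = A\gamma$ by $V$ to turn it into the commutation condition defining $\Sigma_V(A)$, and reading off the $V$-independence from the fact that the left-hand side never mentions $V$. The only thing you add beyond what the paper writes out is the explicit check that $\Sigma_V(A)$ is a closed (hence compact) subgroup of $\mathbf{O}(n)$, which the paper asserts in the statement without comment.
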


\begin{proof}
	Let  $\gamma \in \mathbf{O}(n)$ such that $\gamma A = A \gamma$. Then we have for each $V\in \mathbf{O}(n)$
	\begin{align*}
		\gamma A = A \gamma &\Longleftrightarrow V \gamma V^T V A V^T = V A V^T V \gamma V^T \\
		&\Longleftrightarrow \sigma V A V^T = V A V^T \sigma \text{ for } \sigma = V \gamma V^T \\
		&\Longleftrightarrow \gamma \in \Gamma(A). \qedhere
	\end{align*}
\end{proof}

\begin{remark} \label{rem:com} ~
	\begin{itemize}
		\item[(a)] Recall that the \emph{isotropy subgroup}
		for a point $x$ in some space $X$ characterizes the symmetry of $x$
		with respect to a certain group action.
		More precisely consider a group action $\vartheta$ of a group $G$ on
		a linear space $X$. Then the isotropy subgroup of $x\in X$ is given by
		$\{ g\in G : \vartheta(g) x = x\}$.
		
		If we let $G= \mathbf{O}(n)$ act on matrices $A \in X = \mathbb{R}^{n,n}$ by
		\[
		\vartheta(\gamma) A = \gamma A \gamma^T,
		\]
		then $\Gamma(A) \subset \mathbf{O}(n)$ in Proposition~\ref{thm:main} is the isotropy subgroup
		of $A$ with respect to this action. 
		\item[(b)] If we replace $\mathbf{O}(n)$ by $\mathbf{U}(n)$ (unitary matrices) or $\mathbf{GL}(n,\mathbb{R})$ (invertible matrices) and the
		matrix $V^T$ by $V^*$ or $V^{-1}$, respectively, then we obtain an analogous result for the unitary and invertible isotropy subgroup.
		It is also possible to formulate Proposition~\ref{thm:main} for general orthogonal,
		unitary or invertible operators.
	\end{itemize}
\end{remark}

We now show that $\Sigma_V(A)$ in \eqref{eq:Sigma} is unique up to orthogonal transformations.

\begin{corollary}
	Let $V \in \mathbf{O}(n)$.
	\begin{itemize}
		\item[(a)] For each $U \in \mathbf{O}(n)$ there exists $W \in \mathbf{O}(n)$ such that
		\[
		\Sigma_U(A) = W^T \Sigma_V(A) W.
		\]
		\item[(b)] For each $W \in \mathbf{O}(n)$ there exists $U \in \mathbf{O}(n)$ such that
		\[
		W^T \Sigma_V(A) W = \Sigma_U(A).
		\]
	\end{itemize}
	Thus,
	\begin{equation*}
		\{ \Sigma_U(A) : U \in \mathbf{O}(n) \} = \{ W^T \Sigma_V(A) W : W \in \mathbf{O}(n) \}.
	\end{equation*}
\end{corollary}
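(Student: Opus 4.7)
The plan is to recognize that $\Sigma_V(A)$ is nothing but the orthogonal commutant of the conjugated matrix $B_V := VAV^T$, and then to exploit the fact that changing $V$ to another orthogonal matrix $U$ produces a matrix $B_U$ which is itself orthogonally conjugate to $B_V$. Since conjugation by an orthogonal matrix is an automorphism of $\mathbf{O}(n)$ that carries commutants to commutants, both directions reduce to choosing the right conjugating matrix.

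For part (a), given $U \in \mathbf{O}(n)$, I would set $W := V U^T \in \mathbf{O}(n)$, so that
\begin{equation*}
   U A U^T \;=\; W^T (V A V^T)\, W.
\end{equation*}
Then I would run through the string of equivalences
\begin{equation*}
   \sigma \in \Sigma_U(A)
   \;\Longleftrightarrow\; \sigma\, W^T B_V W = W^T B_V W\, \sigma
   \;\Longleftrightarrow\; (W\sigma W^T) B_V = B_V (W \sigma W^T)
   \;\Longleftrightarrow\; W \sigma W^T \in \Sigma_V(A),
\end{equation*}
which rearranges to $\sigma \in W^T \Sigma_V(A) W$. For part (b), given $W \in \mathbf{O}(n)$, the equation $W = V U^T$ can be solved for $U$, yielding $U := W^T V \in \mathbf{O}(n)$; then (a), applied to this $U$, gives precisely $\Sigma_U(A) = W^T \Sigma_V(A) W$, which is the required identity. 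The final set equality in the statement is then immediate from (a) and (b).

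The argument is essentially bookkeeping; there is no genuine obstacle. The only thing that could go wrong is getting the direction of the conjugation wrong (i.e.\ confusing $W$ with $W^T$), so I would double-check by verifying that $W := VU^T$ is indeed orthogonal and that $W^T B_V W = B_U$ holds in that order, before transporting commutativity across the conjugation.
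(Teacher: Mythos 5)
Your proof is correct and reaches the same key identity $W=VU^T$, $U=W^T V$ that the paper uses, so the two arguments are structurally the same. The one small difference is in how the conjugation relation between $\Sigma_U(A)$ and $\Sigma_V(A)$ is justified: the paper simply quotes the $V$-independence of $\Gamma(A)$ from Proposition~\ref{thm:main}, writing $V^T\Sigma_V(A)V=\Gamma(A)=U^T\Sigma_U(A)U$ and rearranging, whereas you bypass that result and verify directly that $W^T B_V W = B_U$ and transport the commutativity condition across the conjugation by $W$. Your version is marginally more self-contained (it would still work if Proposition~\ref{thm:main} had not been stated), while the paper's is marginally shorter because it reuses the already-proved invariance of $\Gamma(A)$; both are complete and correct.
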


\begin{proof}
	Let $V \in \mathbf{O}(n)$ be given. Then, by Proposition~\ref{thm:main}, we have
	for $U \in \mathbf{O}(n)$
	\begin{equation*}
		V^T \Sigma_{V}(A) V = \Gamma(A) = U^T \Sigma_{U}(A) U
	\end{equation*}
	and therefore
	\begin{equation*}
		\Sigma_{U}(A) = (V U^T)^T \Sigma_{V}(A) (V U^T).
	\end{equation*}
	With $W = VU^T$ we obtain (a) and (b) follows by setting $U = W^T V$.
\end{proof}

\subsection{Equivariance Properties of Self-adjoint Matrices}
\label{subsec:EP}

We now return to the case where $A$ is self-adjoint. Our aim is to extend significantly
Proposition~\ref{prop:main}. This leads to the surprising fact that self-adjoint matrices may
possess hidden symmetries due to repeated eigenvalues. Denote by $\lambda_1 < \dots < \lambda_k$
the (real) sorted eigenvalues of $A$ with multiplicities $m = (m_1,...,m_k)$
and let $V \in \mathbf{O}(n)$ so that $V A V^T = D$, where $D\in \R^{n,n}$ is a diagonal matrix
containing the sorted eigenvalues $\lambda_i\in \R$ of $A$ on its diagonal.

\begin{definition}
	Let $k \leq n$ and $m \in \mathbb{N}^k$ so that $\sum_{i = 1}^k m_i = n$. Define $\mathbf{O_B}(m)$ to be the set of block-diagonal matrices where the $i$-th block is in $\mathbf{O}(m_i)$, i.e.
	\begin{equation*}
		\mathbf{O_B}(m) = \left\{ Q \in \mathbf{O}(n) : Q = \begin{pmatrix}
			Q_1 & 		 & \\
			& \ddots & \\
			&        & Q_k
		\end{pmatrix}
		\text{ with }
		Q_i \in \mathbf{O}(m_i) \quad \forall i = 1,...,k
		\right\}.
	\end{equation*}
\end{definition}
With this useful definition we are able to \emph{completely} characterize the symmetries of a self-adjoint matrix $A\in \R^{n,n}$.
\begin{corollary} \label{cor:sadj}
	Let $A \in \mathbb{R}^{n,n}$ be self-adjoint and $V \in \mathbf{O}(n)$ so that $V$ diagonalizes $A$ (and the eigenvalues on the diagonal are sorted). Then
	\begin{equation*}
		\Sigma_V(A) = \mathbf{O_B}(m),
	\end{equation*}
	where $m$ is the vector that contains the multiplicities of the eigenvalues of $A$.
	In particular, by Proposition \ref{thm:main} we have
	\begin{equation} \label{eq:GammaOB}
		\Gamma(A) = V^T \mathbf{O_B}(m) V.
	\end{equation}
\end{corollary}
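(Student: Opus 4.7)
The plan is to work with the diagonalized matrix $D = V A V^T$, which has the block structure
\begin{equation*}
D = \operatorname{diag}\bigl(\lambda_1 I_{m_1},\ \lambda_2 I_{m_2},\ \ldots,\ \lambda_k I_{m_k}\bigr),
\end{equation*}
and determine exactly which orthogonal $\sigma$ commute with $D$. By definition $\sigma \in \Sigma_V(A)$ iff $\sigma D = D \sigma$, so I would reduce the whole statement to the purely algebraic question: which $\sigma \in \mathbf{O}(n)$ commute with the block-scalar matrix $D$?

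First I would prove the easy inclusion $\mathbf{O_B}(m) \subseteq \Sigma_V(A)$. If $\sigma = \operatorname{diag}(Q_1,\ldots,Q_k)$ with $Q_i \in \mathbf{O}(m_i)$, then $\sigma D = \operatorname{diag}(\lambda_1 Q_1, \ldots, \lambda_k Q_k) = D\sigma$, since each $Q_i$ trivially commutes with the scalar block $\lambda_i I_{m_i}$.

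For the reverse inclusion I would take an arbitrary $\sigma \in \mathbf{O}(n)$ commuting with $D$ and partition it into blocks $\sigma = (\sigma_{ij})_{i,j=1}^k$ matching the multiplicity partition of the rows and columns. The identity $\sigma D = D \sigma$ then reads block-by-block as $\lambda_j\,\sigma_{ij} = \lambda_i\,\sigma_{ij}$, i.e.\ $(\lambda_i - \lambda_j)\sigma_{ij} = 0$. Since the $\lambda_i$ are distinct (as the sorted eigenvalues were listed with multiplicities collected), we conclude $\sigma_{ij} = 0$ for all $i \neq j$. Hence $\sigma$ is block diagonal, $\sigma = \operatorname{diag}(Q_1,\ldots,Q_k)$, and orthogonality of $\sigma$ together with its block-diagonal form forces each $Q_i \in \mathbf{O}(m_i)$. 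This gives $\sigma \in \mathbf{O_B}(m)$, completing the equality $\Sigma_V(A) = \mathbf{O_B}(m)$. The formula \eqref{eq:GammaOB} then follows immediately from Proposition~\ref{thm:main}.

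I do not anticipate a serious obstacle: the heart of the argument is the elementary block-commutation computation for a block-scalar diagonal matrix. The only point that requires a word of care is ensuring that the grouping of eigenvalues into blocks of sizes $m_1,\ldots,m_k$ really produces strictly distinct scalars $\lambda_i$ across blocks, which is guaranteed by the convention that $m$ collects the multiplicities of the distinct eigenvalues of $A$ in sorted order.
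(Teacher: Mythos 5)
Your proposal is correct and follows essentially the same route as the paper: partition $\sigma$ into $m_i \times m_j$ blocks, use the commutation identity $\sigma D = D\sigma$ to get $(\lambda_i - \lambda_j)\sigma_{ij} = 0$, conclude block-diagonality from distinctness of the $\lambda_i$, and then note that orthogonality forces each diagonal block into $\mathbf{O}(m_i)$. The only cosmetic difference is that you split the argument into two explicit inclusions while the paper presents it as a single chain of equivalences.
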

\begin{proof}
	Using the fact that $V$ diagonalizes $A$ we can write $\Sigma_V(A)$ as
	\begin{align*}
		\Sigma_V(A) = \{ \sigma \in \mathbf{O}(n) \mid \sigma V A V^T = V A V^T \sigma \} =\{ \sigma \in \mathbf{O}(n) \mid \sigma D = D \sigma \}.
	\end{align*}
	Thus, we only need to show that $\sigma D= D\sigma$ is equivalent to the fact that $\sigma\in \Sigma_V$ is a block-diagonal matrix.
	Let $\sigma \in \Sigma_V(A)$ and write $\sigma = (\sigma_{i,j})_{i,j = 1,...,k}$ with rectangular blocks $\sigma_{i,j} \in \mathbb{R}^{m_i, m_j}$.
	Then we have $\sigma D = (\lambda_j \sigma_{i,j})_{i,j}$ and $D \sigma = (\lambda_i \sigma_{i,j})_{i,j}$. Therefore, $\sigma \in \Sigma_V$ translates into
	\begin{equation*}
		\sigma \in \mathbf{O}(n) \text{ and } \lambda_j \sigma_{i,j} = \lambda_i \sigma_{i,j} \quad \forall i,j = 1,...,k,
	\end{equation*}
	which is equivalent to $\sigma_{i,j} = 0$ for $i \neq j$ and $\sigma_{i,i} \in \mathbf{O}(m_i)$ for $i = 1,...,k$.
\end{proof}

\begin{remark} \label{rem:sadj} ~
	\begin{enumerate}
		\item[(a)] The order of the eigenvalues $\lambda_i\in\R$ is not relevant as long as $V$ is chosen in such a way
		that all instances of the same eigenvalue on the diagonal of $V A V^T$ are next to each other.
		Otherwise, the elements in $\Sigma_V$ are not block-diagonal.
		\item[(b)] Analogous results for normal matrices in the unitary case and diagonalizable matrices in the invertible case 
		(cf.\ Remark \ref{rem:com} (b)) follow in the same way.
		\item[(c)] Since $\mathbf{O}(1) = \{ +1,-1 \} = \mathbf{Z}_2$ and
		$\mathbf{Z}_2^\ell = \prod_{i = 1}^{\ell} \mathbf{Z}_2 \subseteq \mathbf{O_B}(\ell) = \mathbf{O}(\ell)$
		for every $\ell \in \mathbb{N}$, we have $\mathbf{Z}_2^n \subseteq \Sigma_V$ for each self-adjoint matrix $A$ (independently of the multiplicities of the eigenvalues of $A$). That is, we always have $\Gamma_2(A) \subset \Gamma(A)$
		(cf.\ Proposition \ref{prop:main} and Remark~\ref{rmk:main} (b)), and
		equality holds if and only if $A$ has only simple eigenvalues.
		In particular, $\Gamma(A)$ is finite if and only if $A$ has only simple eigenvalues.
	\end{enumerate}
\end{remark}

\begin{example} \label{ex:equi} ~
	\begin{itemize}
		\item[(a)]
		Let us return to our guiding example from Section~\ref{sec:Motivation} and consider the matrix
		\[
		A(-0.25) = \begin{pmatrix}
		2 & -\frac{3}{\sqrt{2}} & -\frac{3}{\sqrt{2}}\\[0.5em]
		-\frac{3}{\sqrt{2}} & \frac{7}{2} & -\frac{3}{2}\\[0.5em]
		-\frac{3}{\sqrt{2}} & -\frac{3}{2} & \frac{7}{2}
		\end{pmatrix}.
		\]
		The eigenvalues of $A(-0.25)$ are a simple eigenvalue $\lambda_1 = -1$ and a double eigenvalue
		$\lambda_2 = 5$.
		Thus, Corollary~\ref{cor:sadj} yields $\Sigma_V=\mathbf{O_B}(1,2)$
		and it turns out that $A(-0.25)$ is in fact $\mathbf{Z}_2\times \mathbf{O}(2)$-equivariant. Here
		$\mathbf{Z}_2 = \{ I, \gamma_7\}$ (cf.\ Example~\ref{ex:pitchfork}), and
		a reflection $S$ and a rotation $R$ by $\pi/2$ (exact up to four decimal places) within $\mathbf{O}(2)$ are given by
		\[
		S = \begin{pmatrix}
		1 & 0 & 0 \\
		0 & 0 & 1 \\
		0 & 1 & 0
		\end{pmatrix}
		\quad \mbox{and} \quad
		R = \begin{pmatrix}
		0.5000 & 0.8536 & -0.1464\\
		-0.1464 &  0.2500 & 0.9571\\
		0.8536 &-0.4571 & 0.2500
		\end{pmatrix}.
		\]
		It follows that if we have an equilibrium which is not $\mathbf{O}(2)$-symmetric, then we obtain
		an entire nontrivial $\mathbf{O}(2)$-orbit of equilibria. This explains the phenomenon described
		in $(ii)^*$ for the guiding example in Section~\ref{sec:Motivation}, see also Figure \ref{fig:BF} (b).
		
		Finally, observe that the bifurcating equilibria in Figure~\ref{fig:BF} (a) are $\mathbf{O}(2)$-symmetric
		and therefore isolated for each fixed value of $\mu$. Note that generically, it is not expected for a one-parameter family of self-adjoint matrices to possess multiple eigenvalues (see Appendix 10 in \cite{Arnold78}; or also \cite{DeMel94} where results of \cite{Arnold_1971} on general matrices have been extended to self-adjoint matrices in the equivariant context). Rather we have constructed this family for the purpose of illustration.
		\item[(b)]
		Consider the parameter-dependent family of matrices (see \cite{DeMel94})
		\begin{equation*}
			A(\mu)=\begin{pmatrix}
				D(\mu) & B(\mu) & 0 & B(\mu)\\
				B(\mu) & D(\mu) &B(\mu) & 0\\
				0 & B(\mu) &D(\mu) & B(\mu)\\
				B(\mu) & 0 &B(\mu) & D(\mu)
			\end{pmatrix} \in \R^{16,16}
		\end{equation*}
		with
		\begin{equation*}
			D(\mu)=\begin{pmatrix}
				-2.0 +\sin(\mu) & 0.2+\mu^2 & 0.4\mu &0.9\mu^2\\
				0.2+\mu^2 & -0.4 & -0.8+\mu(1-\mu) & \mu\sin(\mu)\\
				0.4\mu & -0.8+\mu(1-\mu) & -1.4 + \cos(\mu) & 0\\
				0.9\mu^2 & \mu\sin(\mu) & 0 & \mu
			\end{pmatrix}
		\end{equation*}
		and
		\begin{equation*}
			B(\mu)=\begin{pmatrix}
				1+\mu\cos(\mu) & -3.5\cos(\mu) &-0.5\mu & -1\\
				-3.5\cos(\mu) & -1+\mu & 0.5\mu^2 & 2+0.5\cos(\mu)\\
				-0.5\mu & 0.5 \mu^2 & 1+\mu & 0\\
				-1 & 2+0.5\cos(\mu) & 0 & \sin(\mu)
			\end{pmatrix}
		\end{equation*}
		for $\mu \in [-3,3]$. Then it is easy to verify that $A(\mu)$ is $\mathbf{D}_4$-equivariant. Here the action of the
		dihedral group $\mathbf{D}_4$ is generated by a rotation $R$ and a reflection $S$, where $R,S \in \R^{16,16}$.
		Written in a $4\times 4$-block structure these are given by
		\begin{equation*}
			R=\begin{pmatrix}
				0 & I_4 & 0 & 0\\
				0 & 0 & I_4 & 0\\
				0 & 0 & 0 & I_4\\
				I_4 & 0 & 0 & 0
			\end{pmatrix}
			\quad \mbox{and} \quad
			S=\begin{pmatrix}
				I_4 & 0 & 0 & 0\\
				0 & 0 & 0 & I_4\\
				0 & 0 & I_4 & 0\\
				0 & I_4 & 0 & 0
			\end{pmatrix}.
		\end{equation*}
		However, $A(\mu)$ is self-adjoint and therefore this matrix family is not just
		$\mathbf{D}_4$- but even	$\Gamma(A)$-equivariant where $\Gamma(A)$ is given in
		\eqref{eq:GammaOB}. Now $A(\mu)$ has $8$ simple and $4$ double eigenvalues,
		where the multiple eigenvalues are induced by the two-dimensional irreducible
		representations of $\mathbf{D}_4$ (see \cite{GSS88}). Hence, 
		for each parameter value $A(\mu)$ is indeed $\mathbf{Z}_2^{8}\otimes \mathbf{O}(2)^4$-equivariant
		by Corollary \ref{cor:sadj} and~\eqref{eq:GammaOB}.
		
		Finally, we explicitly list for $\mu = 0$ a couple of elements of $\Gamma(A)$
		and $\Sigma_V(A)$ for illustration purposes. For instance, the matrices $R$ and $S$
		above are given by
		\begin{equation*}
			R = V^\top \sigma_R V \quad \text{and} \quad S = V^\top \sigma_S V,
		\end{equation*}
		where $\sigma_R, \sigma_S  \in \mathbf{O_B}(1,1,1,2,1,2,2,2,1,1,1,1)$ and
		{ \scriptsize
			\begin{equation*}
				\sigma_R = \begin{pmatrix}
					-1 &   &    &    &   &   &   &    &    &   &   &    &    &   &    &   \\
					& 1 &    &    &   &   &   &    &    &   &   &    &    &   &    &   \\
					&   & -1 &    &   &   &   &    &    &   &   &    &    &   &    &   \\
					&   &    & 0  & 1 &   &   &    &    &   &   &    &    &   &    &   \\
					&   &    & -1 & 0 &   &   &    &    &   &   &    &    &   &    &   \\
					&   &    &    &   & 1 &   &    &    &   &   &    &    &   &    &   \\
					&   &    &    &   &   & 0 & -1 &    &   &   &    &    &   &    &   \\
					&   &    &    &   &   & 1 & 0  &    &   &   &    &    &   &    &   \\
					&   &    &    &   &   &   &    & 0  & 1 &   &    &    &   &    &   \\
					&   &    &    &   &   &   &    & -1 & 0 &   &    &    &   &    &   \\
					&   &    &    &   &   &   &    &    &   & 0 & -1 &    &   &    &   \\
					&   &    &    &   &   &   &    &    &   & 1 & 0  &    &   &    &   \\
					&   &    &    &   &   &   &    &    &   &   &    & -1 &   &    &   \\
					&   &    &    &   &   &   &    &    &   &   &    &    & 1 &    &   \\
					&   &    &    &   &   &   &    &    &   &   &    &    &   & -1 &   \\
					&   &    &    &   &   &   &    &    &   &   &    &    &   &    & 1
				\end{pmatrix}
		\end{equation*} }
		and
		{ \scriptsize
			\begin{equation*}
				\setlength\arraycolsep{4pt}
				\sigma_S = \begin{pmatrix}
					1 &   &    &         &         &   &         &        &         &         &         &        &   &   &   &   \\
					& 1 &    &         &         &   &         &        &         &         &         &        &   &   &   &   \\
					&   &  1 &         &         &   &         &        &         &         &         &        &   &   &   &   \\
					&   &    & -0.6065 & -0.7951 &   &         &        &         &         &         &        &   &   &   &   \\
					&   &    & -0.7951 & 0.6065  &   &         &        &         &         &         &        &   &   &   &   \\
					&   &    &         &         & 1 &         &        &         &         &         &        &   &   &   &   \\
					&   &    &         &         &   & -0.8442 & 0.5361 &         &         &         &        &   &   &   &   \\
					&   &    &         &         &   & 0.5361  & 0.8442 &         &         &         &        &   &   &   &   \\
					&   &    &         &         &   &         &        & 0.9699  & -0.2434 &         &        &   &   &   &   \\
					&   &    &         &         &   &         &        & -0.2434 & -0.9699 &         &        &   &   &   &   \\
					&   &    &         &         &   &         &        &         &         & -0.8607 & 0.5091 &   &   &   &   \\
					&   &    &         &         &   &         &        &         &         & 0.5091  & 0.8607 &   &   &   &   \\
					&   &    &         &         &   &         &        &         &         &         &        & 1 &   &   &   \\
					&   &    &         &         &   &         &        &         &         &         &        &   & 1 &   &   \\
					&   &    &         &         &   &         &        &         &         &         &        &   &   & 1 &   \\
					&   &    &         &         &   &         &        &         &         &         &        &   &   &   & 1
				\end{pmatrix}.
		\end{equation*} }
		A couple of `hidden symmetries' -- i.e.\ elements of $\Gamma(A)$ which are not contained
		in $\mathbf{D}_4$~-- are given by
		{ \scriptsize
			\begin{equation*}
				\gamma_1 = \begin{pmatrix}
					1 &   &   &   &   &   &   &   &   &   &   &   &   &   &   &   \\
					& 1 &   &   &   &   &   &   &   &   &   &   &   &   &   &   \\
					&   & 1 &   &   &   &   &   &   &   &   &   &   &   &   &   \\
					&   &   & 0 &   &   &   &   &   &   &   & 1 &   &   &   &   \\
					&   &   &   & 1 &   &   &   &   &   &   &   &   &   &   &   \\
					&   &   &   &   & 1 &   &   &   &   &   &   &   &   &   &   \\
					&   &   &   &   &   & 1 &   &   &   &   &   &   &   &   &   \\
					&   &   &   &   &   &   & 0 &   &   &   &   &   &   &   & 1 \\
					&   &   &   &   &   &   &   & 1 &   &   &   &   &   &   &   \\
					&   &   &   &   &   &   &   &   & 1 &   &   &   &   &   &   \\
					&   &   &   &   &   &   &   &   &   & 1 &   &   &   &   &   \\
					&   &   & 1 &   &   &   &   &   &   &   & 0 &   &   &   &   \\
					&   &   &   &   &   &   &   &   &   &   &   & 1 &   &   &   \\
					&   &   &   &   &   &   &   &   &   &   &   &   & 1 &   &   \\
					&   &   &   &   &   &   &   &   &   &   &   &   &   & 1 &   \\
					&   &   &   &   &   &   & 1 &   &   &   &   &   &   &   & 0 
				\end{pmatrix},
			\end{equation*}
			\begin{equation*}
				\sigma_1 = \begin{pmatrix}
					1 &   &   &   &   &   &   &   &    &    &   &   &   &   &   &   \\
					& 1 &   &   &   &   &   &   &    &    &   &   &   &   &   &   \\
					&   & 1 &   &   &   &   &   &    &    &   &   &   &   &   &   \\
					&   &   & 1 & 0 &   &   &   &    &    &   &   &   &   &   &   \\
					&   &   & 0 & 1 &   &   &   &    &    &   &   &   &   &   &   \\
					&   &   &   &   & 1 &   &   &    &    &   &   &   &   &   &   \\
					&   &   &   &   &   & 1 & 0 &    &    &   &   &   &   &   &   \\
					&   &   &   &   &   & 0 & 1 &    &    &   &   &   &   &   &   \\
					&   &   &   &   &   &   &   & -1 & 0  &   &   &   &   &   &   \\
					&   &   &   &   &   &   &   & 0  & -1 &   &   &   &   &   &   \\
					&   &   &   &   &   &   &   &    &    & 1 & 0 &   &   &   &   \\
					&   &   &   &   &   &   &   &    &    & 0 & 1 &   &   &   &   \\
					&   &   &   &   &   &   &   &    &    &   &   & 1 &   &   &   \\
					&   &   &   &   &   &   &   &    &    &   &   &   & 1 &   &   \\
					&   &   &   &   &   &   &   &    &    &   &   &   &   & 1 &   \\
					&   &   &   &   &   &   &   &    &    &   &   &   &   &   & 1 
				\end{pmatrix}.
		\end{equation*} }
		or
		{ \scriptsize
			\begin{equation*} 
				\gamma_2 = \begin{pmatrix}
					1 &   &   &         &   &   &   &         &   &   &   &         &   &   &   &         \\
					& 1 &   &         &   &   &   &         &   &   &   &         &   &   &   &         \\
					&   & 1 &         &   &   &   &         &   &   &   &         &   &   &   &         \\
					&   &   & 0.3783  &   &   &   & 0.4850  &   &   &   & 0.6217  &   &   &   & -0.4850 \\
					&   &   &         & 1 &   &   &         &   &   &   &         &   &   &   &         \\
					&   &   &         &   & 1 &   &         &   &   &   &         &   &   &   &         \\
					&   &   &         &   &   & 1 &         &   &   &   &         &   &   &   &         \\
					&   &   & 0.4850  &   &   &   & 0.6217  &   &   &   & -0.4850 &   &   &   & 0.3783  \\
					&   &   &         &   &   &   &         & 1 &   &   &         &   &   &   &         \\
					&   &   &         &   &   &   &         &   & 1 &   &         &   &   &   &         \\
					&   &   &         &   &   &   &         &   &   & 1 &         &   &   &   &         \\
					&   &   & 0.6217  &   &   &   & -0.4850 &   &   &   & 0.3783  &   &   &   & 0.4850  \\
					&   &   &         &   &   &   &         &   &   &   &         & 1 &   &   &   	   \\
					&   &   &         &   &   &   &         &   &   &   &         &   & 1 &   &         \\
					&   &   &         &   &   &   &         &   &   &   &         &   &   & 1 &         \\
					&   &   & -0.4850 &   &   &   & 0.3783  &   &   &   & 0.4992  &   &   &   & 0.6217 
				\end{pmatrix},
			\end{equation*}
			\begin{equation*}
				\sigma_2 = \begin{pmatrix}
					1 &   &   &   &   &   &   &   &   &   &   &   &   &   &   &   \\
					& 1 &   &   &   &   &   &   &   &   &   &   &   &   &   &   \\
					&   & 1 &   &   &   &   &   &   &   &   &   &   &   &   &   \\
					&   &   & 1 & 0 &   &   &   &   &   &   &   &   &   &   &   \\
					&   &   & 0 & 1 &   &   &   &   &   &   &   &   &   &   &   \\
					&   &   &   &   & 1 &   &   &   &   &   &   &   &   &   &   \\
					&   &   &   &   &   & 1 & 0 &   &   &   &   &   &   &   &   \\
					&   &   &   &   &   & 0 & 1 &   &   &   &   &   &   &   &   \\
					&   &   &   &   &   &   &   & 0 & 1 &   &   &   &   &   &   \\
					&   &   &   &   &   &   &   & 1 & 0 &   &   &   &   &   &   \\
					&   &   &   &   &   &   &   &   &   & 1 & 0 &   &   &   &   \\
					&   &   &   &   &   &   &   &   &   & 0 & 1 &   &   &   &   \\
					&   &   &   &   &   &   &   &   &   &   &   & 1 &   &   &   \\
					&   &   &   &   &   &   &   &   &   &   &   &   & 1 &   &   \\
					&   &   &   &   &   &   &   &   &   &   &   &   &   & 1 &   \\
					&   &   &   &   &   &   &   &   &   &   &   &   &   &   & 1 
				\end{pmatrix}.
		\end{equation*} }
	\end{itemize}
\end{example}

\section{Implications for Equivariant Dynamical Systems}
\label{sec:Impl}

In this section, we discuss by an example the implications for dynamical systems of the form
\begin{equation} \label{eq:DS}
	\dot{x} = A(\mu) x-f(x,\mu),
\end{equation}
where $A(\mu)\in \R^{n,n}$ is self-adjoint for all $\mu\in\R$ and
$f \colon \R^n \times \R \to \R^n$ is $\mathbf{O}(n)$-equivariant, i.e.
\begin{equation}
	\label{eq:fGamma}
	f(\gamma x,\mu)=\gamma f(x,\mu) \quad\forall \gamma \in \mathbf{O}(n).
\end{equation}
It follows that the right-hand side in \eqref{eq:DS} is $\Gamma(A(\mu))$-equivariant. In particular, the symmetry group varies with the parameter $\mu$, and a detailed bifurcation analysis in this context should be developed elsewhere. Here, we rather focus on the description of qualitative dynamical phenomena induced by the hidden symmetries.

\begin{remark} ~
	\begin{itemize}
		\item[(a)]Observe that the requirement on $f$ is satisfied if, for instance,
		\begin{equation}
			\label{eq:fgx}
			f(x,\mu) = g(x,\mu) x
		\end{equation}
		where $g\colon\R^n \times \R \to \R$ is $\mathbf{O}(n)$-invariant, that is
		$g(\gamma x,\mu)=g(x,\mu)$ for all $ \gamma \in \mathbf{O}(n)$.
		In particular, the equivariance condition \eqref{eq:fGamma} would hold for $g(x,\mu) = h(x)$ or
		$g(x,\mu) = h(A(\mu) x)$ where $h$ is $\mathbf{O}(n)$-invariant.
		\item[(b)] If we consider e.g.\ the \emph{nonlinear Schr\"odinger/Gross--Pitaevskii equation} \cite{DMM15} or the \emph{cubic Ginzburg--Landau equation} \cite{AK02} in two dimensions
		then a numerical discretization by the method of lines yields a dynamical system of the form \eqref{eq:DS}, where $A$ does not explicitly depend on $\mu$.
		Moreover, if the underlying spatial domain is $\mathbf{D}_\ell$-symmetric ($\ell \ge 3$) then 
		symmetry related bifurcations of the system will be governed by
		$\Gamma(A)$ rather than just $\mathbf{D}_\ell$. In fact, in this case we expect to observe phenomena 
		driven by the hidden symmetries as already described in Example~\ref{ex:equi}~(b).
	\end{itemize}
	\end {remark}
	
	As a concrete example, we consider our guiding example introduced in Section~\ref{sec:Motivation} and let
	\begin{equation*}
		A(\mu) = \begin{pmatrix}
			2 & \sqrt{2}(2 \mu - 1) & \sqrt{2}(2 \mu - 1) \\
			\sqrt{2}(2 \mu - 1) & 3 - 2 \mu & 2 \mu - 1 \\
			\sqrt{2}(2 \mu - 1) & 2 \mu - 1 & 3 - 2 \mu
		\end{pmatrix}
	\end{equation*}
	and $g(x,\mu) = \norm{x}_2^2$ (see \eqref{eq:fgx}).
	The eigenvalues of $A(\mu)$ are $\lambda_1(\mu) = 4 \mu$ with multiplicity $1$ and $\lambda_2(\mu) = 4(1 - \mu)$ with multiplicity $2$.
	
	In the following analytic considerations, we will use the fact that
	a point $x^* \not= 0$ is an equilibrium of \eqref{eq:DS} if and only if $g(x^*,\mu^*)$ is an eigenvalue of $A(\mu)$
	and $x^*$ is a corresponding eigenvector.
	This follows immediately from the structure of \eqref{eq:DS}.
	For $g(x)=\norm{x}_2^2$ this means that every appropriately scaled eigenvector of a positive eigenvalue of $A(\mu)$ is an equilibrium and vice versa (i.e.\ every equilibrium is an appropriately scaled eigenvector of $A(\mu)$).

	Since $0$ is always an equilibrium it will be omitted in the following considerations.
	By the results in this work, we immediately know how the set of equilibria of \eqref{eq:DS} changes with respect to $\mu$.
	Observe that if $x^*$ is an equilibrium, then the $\Gamma(A(\mu))$-equivariance implies that $\gamma x^*\in\R^n$ is
	an equilibrium for all $\gamma \in \Gamma(A(\mu))$.
	
	\begin{itemize}
		\item $\mu < 0$: There is a circle of equilibria induced by the $\mathbf{O}(2)$ equivariance of
		$f$ (see Example~\ref{ex:equi}~(a)).
		\item $\mu = 0$: There still is a circle of equilibria and a pitchfork bifurcation occurs (cf.\ Section~\ref{sec:Motivation}).
		\item $\mu \in (0,0.5)$: There is a circle of equilibria and two isolated equilibrium points.
		\item $\mu = 0.5$: Since $A(0.5)$ possesses the threefold eigenvalue $\lambda_1(0.5) = \lambda_2(0.5) = 2$
		the set of equilibria becomes a sphere.
		\item $\mu\in(0.5,1)$: The sphere breaks up and there is again a circle of equilibria and two isolated equilibrium points.
		\item $\mu = 1$: A subcritical pitchfork bifurcation occurs, and the circle of equilibria disappears.
		\item $\mu > 1$: There are only two equilibria left.
	\end{itemize}
	Figure \ref{fig:ex} shows the set of equilibria for different values of $\mu$.
	
	\begin{figure}[htb]
		\centering
		\begin{minipage}[t]{0.49\textwidth}
			\centering
			\subfiguretitle{(a) $ \mu = -0.25 $}
			\includegraphics[width = 0.95\textwidth]{pics/mue_-025}
		\end{minipage}
		\begin{minipage}[t]{0.49\textwidth}
			\centering
			\subfiguretitle{(b) $ \mu = 0.01 $}
			\includegraphics[width = 0.95\textwidth]{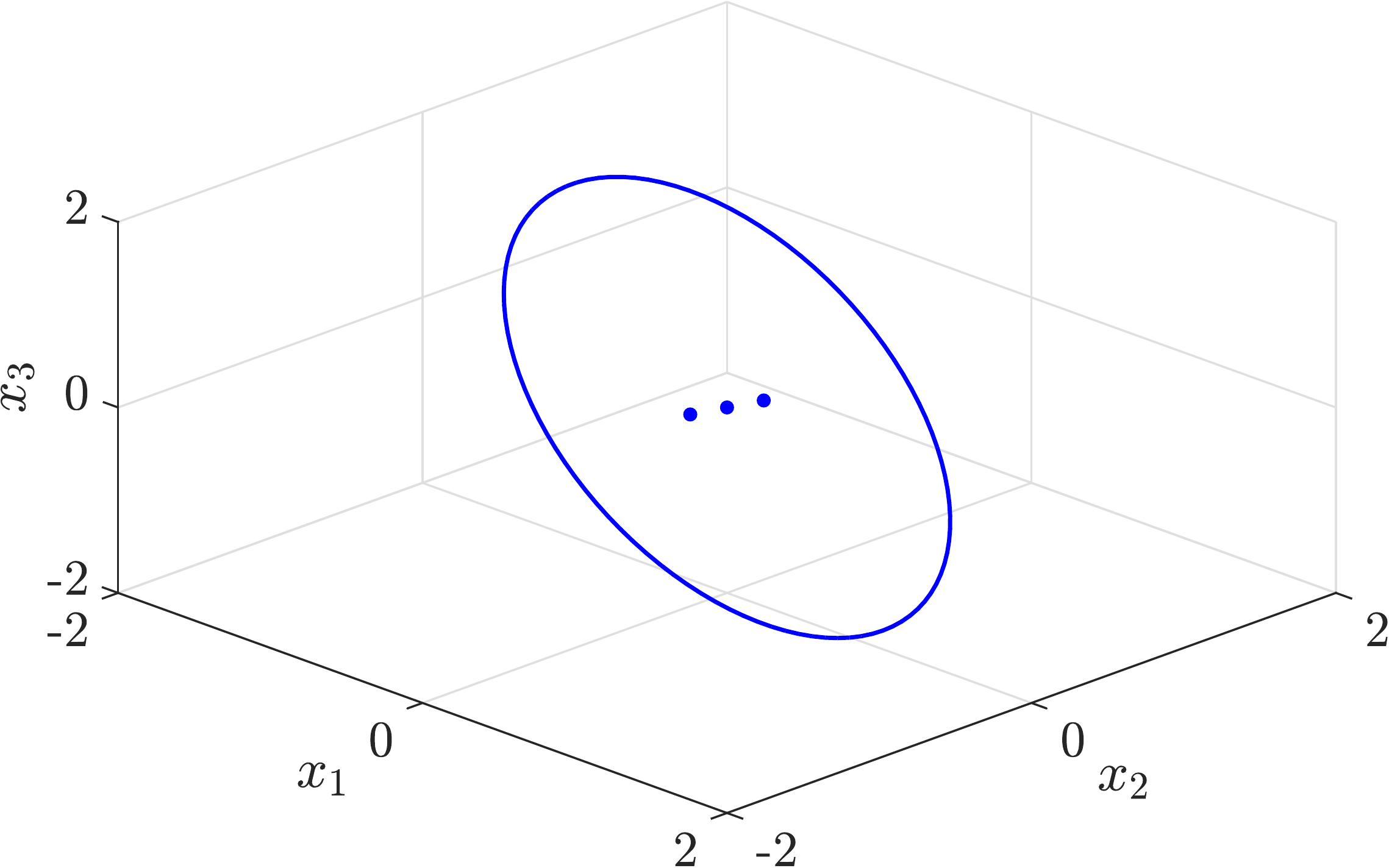}
		\end{minipage} \\[2ex]
		\begin{minipage}[t]{0.49\textwidth}
			\centering
			\subfiguretitle{(c) $ \mu = 0.99 $}
			\includegraphics[width = 0.95\textwidth]{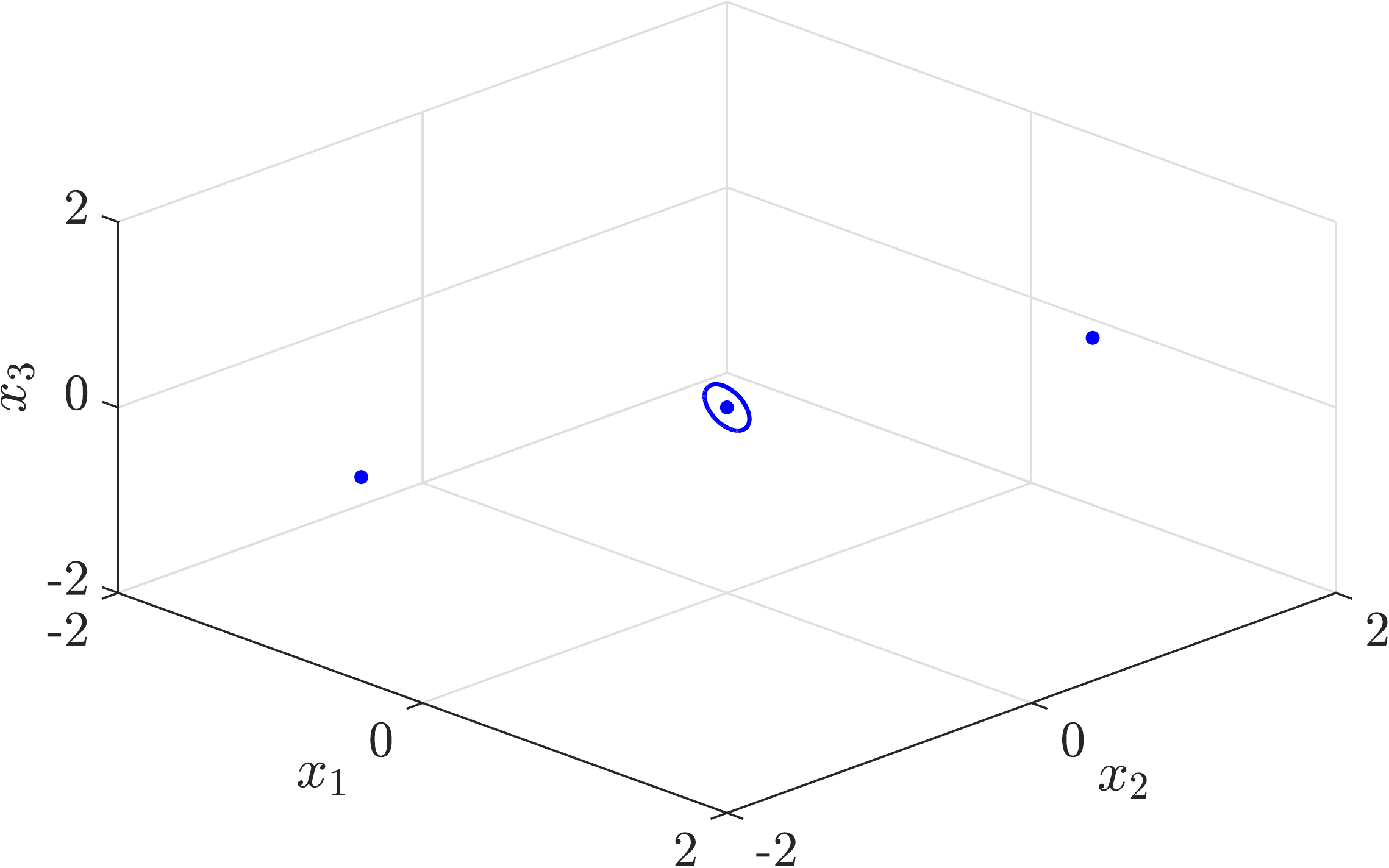}
		\end{minipage}
		\begin{minipage}[t]{0.49\textwidth}
			\centering
			\subfiguretitle{(d) $ \mu = 1.25 $}
			\includegraphics[width = 0.95\textwidth]{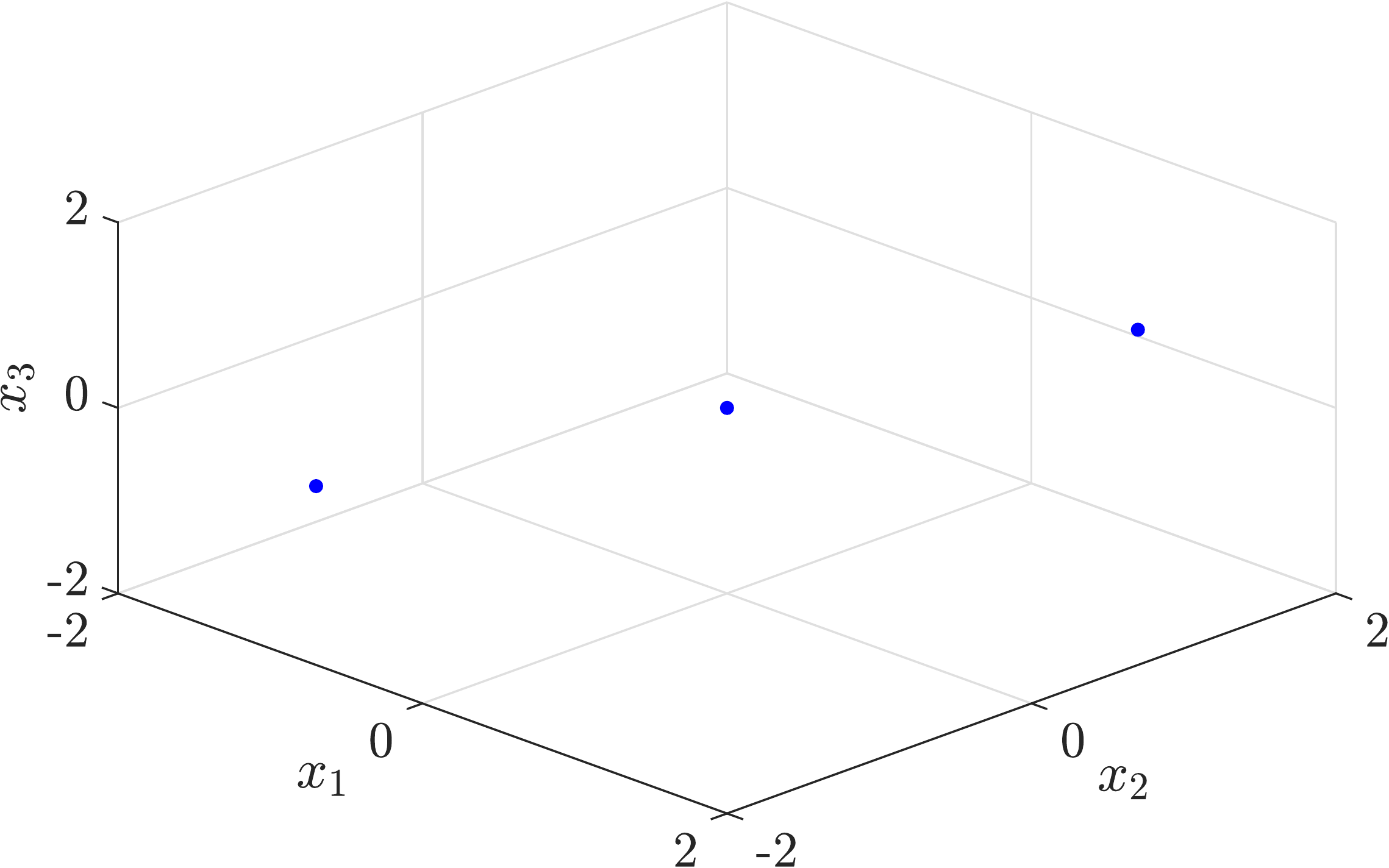}
		\end{minipage}
		\caption{The set of equilibria of the dynamical system \eqref{eq:DS} for different values of $\mu$.}
		\label{fig:ex}
	\end{figure}
	
	\section{Other Applications}
	\label{sec:appl}
	
	In addition to the implications for symmetry breaking bifurcation phenomena
	as illustrated in Section \ref{sec:Impl}, our results have further applications. We
	illustrate this briefly by the following three examples.
	
	\subsection{Two-sided orthogonal Procrustes problem}
	\label{sec:app1}
	
	Given two symmetric matrices $A, B \in \R^{n, n}$, the two-sided orthogonal Procrustes problem can be defined as follows: Find an orthogonal matrix $ P \in \mathbf{O}(n) $ such that the cost function $ \norm{P A - B P}_F $ is minimized. It is well known -- see, for instance, \cite{Scho68,HRW92,AW00} -- that an optimal solution is given by $ P = V_B^T V_A $, where $ D_A = V_A A V_A^T $ and $ D_B = V_B B V_B^T $ are the eigendecompositions of $ A $ and $ B $, respectively. Note that the eigenvalues in $ D_A $ and $ D_B $ both have to be sorted in nonincreasing (or, alternatively, nondecreasing) order. If the cost function is to be maximized, the eigenvalues need to be ordered in opposite order. With the aid of the results from Section~\ref{sec:mainres}, we can now characterize all solutions of this form, i.e.
	\begin{equation*}
		P \in \Sigma_{A,B} = \Bigl\{ V_B^T \sigma_B^T \sigma_A V_A \, \Bigm \vert \, \sigma_A \in \Sigma_{V_A}(A) \text{ and } \sigma_B \in \Sigma_{V_B}(B) \Bigr\},
	\end{equation*}
	since for such $ P $ we obtain
	\begin{align*}
		\norm{P A - B P}_F &= \norm{V_B^T \sigma_B^T \sigma_A V_A V_A^T D_A V_A - V_B^T D_B V_B V_B^T \sigma_B^T \sigma_A V_A}_F \\
		&= \norm{\sigma_B^T \sigma_A D_A - D_B \sigma_B^T \sigma_A}_F \\
		&= \norm{\sigma_B^T D_A \sigma_A - \sigma_B^T D_B \sigma_A}_F \\
		&= \norm{D_A - D_B}_F,
	\end{align*}
	which is indeed the optimal solution. Here, we used the invariance of the Frobenius norm under unitary transformations and the equivariance properties.
	
	\begin{remark} \label{rem:opp} ~
		\begin{enumerate}[label=(\alph*)]
			\item If $ D_A = D_B $, then $ \sigma_B^T \sigma_A \in \Sigma_{V_A}(A) $ and it suffices to consider matrices of the form $ P = V_B^T \sigma V_A $ for $ \sigma \in \Sigma_{V_A}(A) $ (or, equivalently, $ \sigma \in \Sigma_{V_B}(B) $).
			\item If, furthermore, all eigenvalues are distinct, then the eigenvectors are determined up to the sign and we obtain the group $ \Sigma $ and thus the special case derived in \cite{Scho68}.
			\item Since minimizing the Procrustes cost function corresponds to maximizing the cost function $ \tr(A^T P^T B P) $ and vice versa, the results can be extended to the orthogonal relaxation of the \emph{quadratic assignment problem (QAP)} \cite{HRW92,AW00}.
			\item Given two undirected graphs $G_A$ and $G_B$ with adjacency matrices $ A $ and $ B $, the graphs are isomorphic if they are isospectral and $ \Sigma_{A,B} $ contains a permutation matrix, see also \cite{KS18}.
		\end{enumerate}
	\end{remark}
	
	\subsection{Graph Symmetries} \label{sec:app2}
	An isomorphism from a graph to itself is called an automorphism. Let $A$ be the adjacency matrix of an undirected graph $G_A$, then the \textit{automorphism group} (or \textit{symmetry group}) of $G_A$ is defined as
	\begin{equation*}
		\Aut(G_A) = \big\{ PA = AP \, \big\vert \, P \text{ permutation matrix} \big\}.
	\end{equation*}
	A graph $G_A$ is called \textit{asymmetric} if $\Aut(G_A)$ is trivial, i.e.\ $\Aut(G_A) = \{ \mathrm{Id} \}$. Since $A$ is self-adjoint, we can use Corollary~\ref{cor:sadj} to identify the orthogonal commutator $\Gamma(A)$ of~$A$. Permutation matrices are orthogonal, hence $\Aut(G_A) \subseteq \Gamma(A)$.
	
	Our results show that even in the case where the graph $G_A$ is asymmetric it typically possesses additional symmetries -- namely the elements of the group $\Gamma(A)$.
	We illustrate this with the following example (cf.\ \cite{FS2015}, Figure 5):
	\begin{example}
		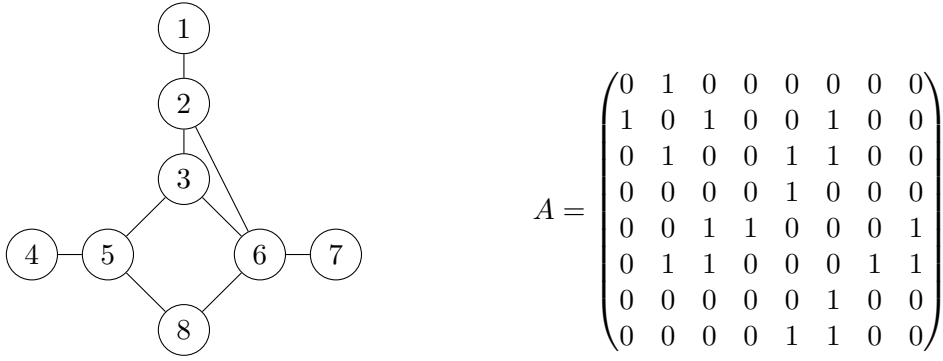
\begin{figure}
			\centering
			\begin{minipage}[b]{0.4\textwidth}
				\centering
				\begin{tikzpicture}
				\node[shape=circle,draw=black] (A) at (0,0) {$1$};
				\node[shape=circle,draw=black] (B) at (0,-1) {$2$};
				\node[shape=circle,draw=black] (C) at (0,-2) {$3$};
				\node[shape=circle,draw=black] (D) at (-2,-3) {$4$};
				\node[shape=circle,draw=black] (E) at (-1,-3) {$5$};
				\node[shape=circle,draw=black] (F) at (1,-3) {$6$} ;
				\node[shape=circle,draw=black] (G) at (2,-3) {$7$};
				\node[shape=circle,draw=black] (H) at (0,-4) {$8$};
				
				\path [-](A) edge node {} (B);
				\path [-](B) edge node {} (C);
				\path [-](B) edge node {} (F);
				\path [-](C) edge node {} (E);
				\path [-](C) edge node {} (F);
				\path [-](D) edge node {} (E);
				\path [-](E) edge node {} (H);
				\path [-](F) edge node {} (G);
				\path [-](F) edge node {} (H);
				\end{tikzpicture}
			\end{minipage}
			\begin{minipage}[b]{0.5\textwidth}
				\centering
				\begin{align*}
					A &= \begin{pmatrix}
						0 & 1 & 0 & 0 & 0 & 0 & 0 & 0 \\
						1 & 0 & 1 & 0 & 0 & 1 & 0 & 0 \\
						0 & 1 & 0 & 0 & 1 & 1 & 0 & 0 \\
						0 & 0 & 0 & 0 & 1 & 0 & 0 & 0 \\
						0 & 0 & 1 & 1 & 0 & 0 & 0 & 1 \\
						0 & 1 & 1 & 0 & 0 & 0 & 1 & 1 \\
						0 & 0 & 0 & 0 & 0 & 1 & 0 & 0 \\
						0 & 0 & 0 & 0 & 1 & 1 & 0 & 0
					\end{pmatrix}
				\end{align*}
			\end{minipage}
			\caption{Asymmetric graph and its adjacency matrix.}
			\label{fig:Graph}
		\end{figure}
		Consider the graph $G_A$ with adjacency matrix $A$ shown in Figure~\ref{fig:Graph}. The graph is asymmetric (due to the edge between the vertices $2$ and $6$). The eigenvalues are
		$(\lambda_1,\ldots,\lambda_7) = ( -2.24, -1.66, -0.83, 0, 0.74, 1.29, 2.70 )$ with
		corresponding multiplicities $m = (1,1,1,2,1,1,1)$. (Here we use the notation from Section~\ref{subsec:EP}.)
		By Corollary~\ref{cor:sadj}, we know that if $V$ is the orthogonal matrix that diagonalizes $A$,
		then all $\gamma \in \Gamma(A)$ are of the form 
		\begin{equation*}
			\gamma = V^T \sigma V = V^T \begin{pmatrix}
				\pm 1 &  &  &  &  &  &  &  \\
				& \pm 1 & & & & & & \\
				& & \pm 1 & & & & & \\
				& & & Q & & & & \\
				& & & & \pm 1 & & & \\
				& & & & & \pm 1 & & \\
				& & & & & & \pm 1 &
			\end{pmatrix} V, \quad \sigma \in \Sigma_V(A),
		\end{equation*}
		with an arbitrary $Q \in \mathbf{O}(2)$. For instance, we have $\gamma A = A \gamma$ for
		{ \footnotesize
			\begin{align*}
				\gamma = \begin{pmatrix}
					0.25  & 0 & 0.75 & -0.25 & 0 & 0 & -0.25 & -0.5 \\
					0 	  & 1 & 0    & 0     & 0 & 0 & 0     & 0    \\
					0.75  & 0 & 0.25 & 0.25  & 0 & 0 & 0.25  & 0.5  \\
					-0.25 & 0 & 0.25 & 0.25  & 0 & 0 & -0.75 & 0.5  \\
					0     & 0 & 0    & 0     & 1 & 0 & 0     & 0    \\
					0     & 0 & 0    & 0     & 0 & 1 & 0     & 0    \\
					-0.25 & 0 & 0.25 & -0.75 & 0 & 0 & 0.25  & 0.5  \\
					-0.5  & 0 & 0.5  & 0.5   & 0 & 0 & 0.5   & 0    
				\end{pmatrix} \text{ with }  
				\sigma = \begin{pmatrix}
					1 &   &   &    &    &   &   &   \\
					& 1 &   &    &    &   &   &   \\
					&   & 1 &    &    &   &   &   \\
					&   &   & -1 &    &   &   &   \\
					&   &   &    & -1 &   &   &   \\
					&   &   &    &    & 1 &   &   \\
					&   &   &    &    &   & 1 &   \\
					&   &   &    &    &   &   & 1 \\
				\end{pmatrix},
		\end{align*} }
		or for
		{ \footnotesize
			\begin{align*}
				\gamma = \begin{pmatrix}
					0.625   & 0 & 0.375   & 0.2286  & 0 & 0 & 0.2286  & -0.6036 \\
					0 	    & 1 & 0       & 0       & 0 & 0 & 0       & 0       \\
					0.375   & 0 & 0.625   & -0.2286 & 0 & 0 & -0.2286 & 0.6036  \\
					-0.4786 & 0 & 0.4786  & 0.625   & 0 & 0 & -0.375  & -0.1036 \\
					0       & 0 & 0       & 0       & 1 & 0 & 0       & 0       \\
					0       & 0 & 0       & 0       & 0 & 1 & 0       & 0       \\
					-0.4786 & 0 & 0.4786  & -0.375  & 0 & 0 & 0.625   & -0.1036 \\
					0.1036  & 0 & -0.1036 & 0.6036  & 0 & 0 & 0.6036  & 0.5    
				\end{pmatrix} 
				\text{ with } 
				\sigma = \begin{pmatrix}
					1 &   &   &    &   &   &   &   \\
					& 1 &   &    &   &   &   &   \\
					&   & 1 &    &   &   &   &   \\
					&   &   & 0  & 1 &   &   &   \\
					&   &   & -1 & 0 &   &   &   \\
					&   &   &    &   & 1 &   &   \\
					&   &   &    &   &   & 1 &   \\
					&   &   &    &   &   &   & 1 \\
				\end{pmatrix}.
		\end{align*} }
	\end{example}
	
	\subsection{Taylor Expansions}
	\label{sec:app3}
	
	Finally, let us briefly mention one implication involving Taylor expansions.
	In fact, in this context our main result Corollary \ref{cor:sadj} can be used to develop a
	novel general technique for the construction of higher order stencils for
	real valued functions of several variables.
	
	Suppose that $f\colon\R^n \to \R$ is smooth in a neighborhood of
	$\bar x\in\R^n$. In what follows, we use Corollary \ref{cor:sadj} to
	construct a four-point stencil which provides a second order approximation of
	evaluations of the fourth-order derivative in $\bar x$. For convenience,
	we write the Taylor expansion of $f$ in $\bar x$ as
	\[
	f(\bar x+h) = f(\bar x) + \nabla f(\bar x)^T h + \frac{1}{2} h^T H(\bar x) h + \sum_{j=3}^\infty g_j(\bar x,h),
	\]
	where $g_j(\bar x,h) = O(\| h\|^j)$, $j=3,4,\ldots$, and $H(\bar x)$ is the Hessian matrix of $f$ at $\bar x$.
	\begin{corollary}
		Denote by $\Gamma(\bar x)$ the group in \eqref{eq:GammaOB}
		corresponding to the Hessian matrix $H(\bar x)$.
		Then for all $\gamma \in\Gamma(\bar x)$ we have
		\begin{equation}
			\label{eq:Taylor1}
			f(\bar x+\gamma h) -2f(\bar x)+f(\bar x-\gamma h) = h^T H(\bar x) h + 2 g_4(\bar x,\gamma h) + O(\| h\|^6),
		\end{equation}
		and therefore for all $\gamma_1, \gamma_2\in\Gamma(\bar x)$
		\begin{equation}\label{eq:Taylor2}
			\begin{split}
				& ~ f(\bar x+\gamma_1 h) + f(\bar x-\gamma_1 h) -  f(\bar x+\gamma_2 h) - f(\bar x-\gamma_2 h) \\
				=& ~ 2(g_4(\bar x,\gamma_1 h) - g_4(\bar x,\gamma_2 h)) + O(\| h\|^6).
			\end{split}
		\end{equation}
		In particular, $f(\bar x+\gamma_1 h) + f(\bar x-\gamma_1 h) -  f(\bar x+\gamma_2 h) - f(\bar x-\gamma_2 h) = O(\| h\|^4)$.
	\end{corollary}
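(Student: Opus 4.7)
The plan is to expand both sides via the Taylor series and exploit two features: the homogeneity of each remainder $g_j(\bar x,\cdot)$ of degree $j$, and the equivariance $\gamma H(\bar x) = H(\bar x)\gamma$ for $\gamma\in\Gamma(\bar x)$ guaranteed by Corollary~\ref{cor:sadj} applied to the self-adjoint Hessian $H(\bar x)$.

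First I would write out, for fixed $\gamma\in\Gamma(\bar x)$,
\[
f(\bar x+\gamma h)=f(\bar x)+\nabla f(\bar x)^T(\gamma h)+\tfrac12(\gamma h)^T H(\bar x)(\gamma h)+\sum_{j=3}^{5}g_j(\bar x,\gamma h)+O(\|h\|^6),
\]
and add this to the analogous expansion for $f(\bar x-\gamma h)$. Since $g_j(\bar x,\cdot)$ is homogeneous of degree $j$, the identity $g_j(\bar x,-\gamma h)=(-1)^j g_j(\bar x,\gamma h)$ kills the linear, cubic and quintic contributions, leaving only $2f(\bar x)$, the quadratic form, and $2g_4(\bar x,\gamma h)$ up to the $O(\|h\|^6)$ tail.

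Next I would invoke the equivariance: because $\gamma$ is orthogonal and commutes with $H(\bar x)$, one has $\gamma^T H(\bar x)\gamma = H(\bar x)$, so $(\gamma h)^T H(\bar x)(\gamma h)=h^T H(\bar x)h$. Combining this with the previous step yields~\eqref{eq:Taylor1}. Identity~\eqref{eq:Taylor2} then follows simply by subtracting the instance of~\eqref{eq:Taylor1} for $\gamma_2$ from the one for $\gamma_1$: the $-2f(\bar x)$ terms and the quadratic forms $h^T H(\bar x)h$ cancel exactly, leaving $2(g_4(\bar x,\gamma_1 h)-g_4(\bar x,\gamma_2 h))+O(\|h\|^6)$.

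For the last assertion, I would note that $g_4(\bar x,v)=O(\|v\|^4)$ and $\|\gamma_i h\|=\|h\|$ because $\gamma_i\in\mathbf{O}(n)$, hence $g_4(\bar x,\gamma_i h)=O(\|h\|^4)$ for $i=1,2$, and the $O(\|h\|^6)$ remainder is absorbed. There is no real obstacle here; the proof is almost entirely bookkeeping, and the only substantive ingredient is the equivariance $\gamma^T H(\bar x)\gamma=H(\bar x)$ provided by Corollary~\ref{cor:sadj}, which is precisely what makes the quadratic terms match up so that the $O(\|h\|^4)$ cancellation in~\eqref{eq:Taylor2} occurs.
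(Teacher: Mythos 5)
Your proof is correct and follows essentially the same route as the paper: expand $f(\bar x\pm\gamma h)$ by Taylor, use the homogeneity of each $g_j$ to cancel the odd-degree terms under the $\pm$ symmetry, and use orthogonality plus commutation ($\gamma^T H(\bar x)\gamma = H(\bar x)$) to reduce $(\gamma h)^T H(\bar x)(\gamma h)$ to $h^T H(\bar x)h$. The paper carries out exactly these steps, merely stating them more tersely, so there is nothing substantively different to compare.
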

	
	\begin{proof}
		For $h\in\R^n$ and $\gamma_j \in\Gamma(\bar x)$ $(j=1,2)$ we compute
		\[
		f(\bar x\pm \gamma_j h) = 
		f(\bar x) \pm \nabla f(\bar x)^T \gamma_j h + \frac{1}{2} h^T H(\bar x) h \pm g_3(\bar x,\gamma_j h) + g_4(\bar x,\gamma_j h) \pm g_5(\bar x,\gamma_j h) + \cdots
		\]
		Therefore, using the fact that $\Gamma(\bar x) \subset \mathbf{O}(n)$
		\begin{eqnarray*}
			f(\bar x+\gamma_1 h) +f(\bar x-\gamma_1 h) & = & 2 \left( f(\bar x)  + \frac{1}{2} h^T H(\bar x) h + g_4(\bar x,\gamma_1 h) + O(\| h\|^6)\right), \\
			f(\bar x+\gamma_2 h) +f(\bar x-\gamma_2 h) & = & 2 \left( f(\bar x)  + \frac{1}{2} h^T H(\bar x) h + g_4(\bar x,\gamma_2 h) + O(\| h\|^6)\right), 
		\end{eqnarray*}
		and \eqref{eq:Taylor1}, \eqref{eq:Taylor2} immediately follow.
	\end{proof}
	
	Obviously, if $\gamma_1 = \pm \gamma_2$ then
	this result is not useful. However, for all other
	choices of $\gamma_j$ this leads to interesting approximations of the fourth-order derivative
	as long as $h$ is not an eigenvector of $\gamma_j$ ($j=1,2$).
	
	\begin{example}
		Let $f\colon\R^3 \to \R$ be defined by
		\[
		f(x_1,x_2,x_3) = x_1 x_2 x_3^2 +x_1^2 - 3x_2^2 + x_2 \sin(x_1) - x_2^2 x_3^2.
		\]
		We choose $\bar x = (1,1,1)^T$ and compute
		\[
		H(\bar x) = \begin{pmatrix}
		2- \sin(1) & 1 + \cos(1) & 2 \\ 
		1+\cos(1) & -8 & -2 \\
		2 & -2 & 0 
		\end{pmatrix}.
		\]
		The choice of
		\[
		V = 
		\begin{pmatrix}
		-0.1968 & 0.9459 & 0.2578  \\
		0.5659  & 0.3243 & -0.7580 \\
		0.8006  & 0.0033 & 0.5992
		\end{pmatrix},\quad
		\sigma_1 = I\quad \mbox{and}\quad
		\sigma_2 = 
		\begin{pmatrix}
		-1  & 0 & 0\\
		0  & 1 & 0 \\
		0 &  0 & 1 
		\end{pmatrix},
		\]
		where $\sigma_1,\sigma_2 \in \Sigma_V(H(\bar x))$ leads to
		\[
		\gamma_1 = I\quad \mbox{and}\quad
		\gamma_2 = 
		\begin{pmatrix}
		0.9225 & 0.3723 & 0.1015\\
		0.3723  & -0.7896 & -0.4877 \\
		0.1015 &  -0.4877 & 0.8671 
		\end{pmatrix}.
		\]
		For $h=(0.2,0.05,0.1)^T$ we obtain
		\[
		f(\bar x+h) + f(\bar x-h) -  f(\bar x+\gamma_2 h) - f(\bar x-\gamma_2 h) \approx 6.40\cdot 10^{-5},
		\]
		and for $h=\frac{1}{10}(0.2,0.05,0.1)^T$ one computes
		\[
		f(\bar x+h) + f(\bar x-h) -  f(\bar x+\gamma_2 h) - f(\bar x-\gamma_2 h) \approx 6.38\cdot 10^{-9}
		\]
		as expected.
	\end{example}
	
	\section*{Acknowledgements}
	
	This work is supported by the Priority Programme SPP 1881 \emph{Turbulent Superstructures} of the Deutsche Forschungsgemeinschaft. We also thank an anonymous reviewer for important comments on the contents of this article.
	
	\bibliographystyle{unsrt}
	\bibliography{SCSN}

\begin{thebibliography}{10}

\bibitem{GSS88}
M.~Golubitsky, I.~Stewart, and D.~Schaeffer.
\newblock {\em Singularities and Groups in Bifurcation Theory}.
\newblock Springer, 1988.

\bibitem{GS03}
M.~Golubitsky and I.~Stewart.
\newblock {\em The Symmetry Perspective: from equilibrium to chaos in phase
  space and physical space}.
\newblock Birkh\"auser, 2003.

\bibitem{De17}
M.~Dellnitz.
\newblock Self-adjoint matrices are equivariant.
\newblock {\em arXiv preprint 1701.07020}, 2017.

\bibitem{Scho68}
P.~Sch{\"o}nemann.
\newblock On two-sided orthogonal {P}rocrustes problems.
\newblock {\em Psychometrika}, 33(1):19--33, 1968.

\bibitem{Bro89}
R.W. Brockett.
\newblock Least squares matching problems.
\newblock {\em Linear Algebra and its Applications}, 122:761--777, 1989.

\bibitem{Bro91}
R.W. Brockett.
\newblock Dynamical systems that sort lists, diagonalize matrices and solve
  linear programming problems.
\newblock {\em Linear Algebra and its Applications}, 146:79--91, 1991.

\bibitem{CM96}
H.~Chat\'{e} and P.~Manneville.
\newblock Phase diagram of the two-dimensional complex {G}inzburg--{L}andau
  equation.
\newblock {\em Physica A: Statistical Mechanics and its Applications},
  224:348--368, 1996.

\bibitem{AK02}
I.S. Aranson and L.~Kramer.
\newblock {\em The world of the complex Ginzburg--Landau equation}, volume~74
  of {\em Reviews of Modern Physics}.
\newblock The American Physical Society, 2002.

\bibitem{DA86}
G.~Dangelmayr and D.~Armbruster.
\newblock Steady state mode interactions in the presence of {O}(2) symmetry and
  in non-flux boundary conditions.
\newblock In {\em Multiparameter Bifurcation Theory (M. Golubitsky and J.
  Guckenheimer eds.)}, volume~56 of {\em Contemporary Mathematics}, pages
  53--68, 1986.

\bibitem{WS84}
B.~Werner and A.~Spence.
\newblock The computation of symmetry-breaking bifurcation points.
\newblock {\em SIAM J. Numer. Anal.}, 21(2):388--399, 1984.

\bibitem{Arnold78}
V.~I. Arnold.
\newblock Mathematical methods of classical mechanics.
\newblock Number~60 in Graduate Texts in Mathematics. Springer New York, 1978.

\bibitem{DeMel94}
M.~Dellnitz and I.~Melbourne.
\newblock Generic movement of eigenvalues for equivariant self-adjoint
  matrices.
\newblock {\em Journal of Computational and Applied Mathematics},
  55(3):249--259, 1994.

\bibitem{Arnold_1971}
V.~I. Arnold.
\newblock On matrices depending on parameters.
\newblock {\em Russian Mathematical Surveys}, 26(2):29--43, 1971.

\bibitem{DMM15}
R.~Driben, T.~Meier, and B.~A Malomed.
\newblock Creation of vortices by torque in multidimensional media with
  inhomogeneous defocusing nonlinearity.
\newblock {\em Scientific reports}, 5, 2015.

\bibitem{HRW92}
S.~W. Hadley, F.~Rendl, and H.~Wolkowicz.
\newblock A new lower bound via projection for the quadratic assignment
  problem.
\newblock {\em Mathematics of Operations Research}, 17:727--739, 1992.

\bibitem{AW00}
K.~Anstreicher and H.~Wolkowicz.
\newblock On {L}agrangian relaxation of quadratic matrix constraints.
\newblock {\em SIAM Journal on Matrix Analysis and Applications}, 22(1):41--55,
  2000.

\bibitem{KS18}
S.~Klus and T.~Sahai.
\newblock A spectral assignment approach for the graph isomorphism problem.
\newblock {\em Information and Inference: A Journal of the IMA}, 2018.

\bibitem{FS2015}
M.~Fiori and G.~Sapiro.
\newblock On spectral properties for graph matching and graph isomorphism
  problems.
\newblock {\em Information and Inference}, 4(1):63--76, 2015.

\end{thebibliography}

\end{document}